\let\@wraptoccontribs\wraptoccontribs
\newtheorem{thm}[equation]{Theorem}
\newtheorem{lem}[equation]{Lemma}
\newtheorem*{conjecture*}{Conjecture}
\newtheorem{cor}[equation]{Corollary}
\newtheorem{prop}[equation]{Proposition}
\theoremstyle{definition}
\newtheorem{rem}[equation]{Remark}
\newtheorem{defn}[equation]{Definition}
\numberwithin{equation}{section}
\def\F{\mathbb{F}}
\def\Fp{\F_p}
\def\A{\mathcal{A}}
\def\O{\mathcal{O}}
\def\P{\mathcal{P}}
\def\cC{\mathcal{C}}
\def\p{\mathfrak{p}}
\def\q{\mathfrak{q}}
\def\Hom{\mathrm{Hom}}
\def\Res{\mathrm{Res}}
\def\Frob{\mathrm{Fr}}
\def\image{\mathrm{image}}
\def\Sel{\mathrm{Sel}}
\def\loc{\mathrm{loc}}
\def\cond{\mathrm{cond}}
\def\ur{\mathrm{ur}}
\def\map#1{\;\xrightarrow{#1}\;}
\def\bmu{\boldsymbol{\mu}}
\def\too{\longrightarrow}
\def\dirsum#1{\underset{#1}{\textstyle\bigoplus}}
\def\Hu{H^1_\ur}
\begin{document}
\title{Increasing the $p$-Selmer rank by twisting}

\author{Minseok Kim}
\address{Department of Mathematics, Yonsei University, Seoul 03722, Republic of Korea}
\email{\href{mailto:kms727@yonsei.ac.kr}{kms727@yonsei.ac.kr}}

\markboth{Minseok Kim}{Increasing the $p$-Selmer rank by twisting}


\maketitle
\begin{abstract}
In this paper, we study the $p$-Selmer groups in the family of $p$-twists of an elliptic curve $E$ over a number field $K$. We prove that if $E/K$ is an elliptic curve over a number field $K$, and if $d$ is congruent to the dimension of the Selmer group of $E/K$ modulo $2$ and is greater than that dimension,
then there exist infinitely many characters $\chi \in \Hom(G_K, \mu_p)$ such that  
$\dim_{\F_p}(\Sel_p(E/K, \chi)) = d$ under certain conditions.

\end{abstract}
\maketitle

\section{Introduction}

Let $p$ be a prime and let $\F_p$ be the finite field with $p$-elements. For an $\F_p$-vector space $V$, we write $\dim_p(V)$ for the dimension of $V$ over $\F_p$.
For a field $k$, denote by $G(F/k)$ the Galois group of field extension $F/k$ and $G_k$ by the absolute Galois group of $k$.

Mazur and Rubin \cite[Proposition~5.2]{H10} proved that if \( G(K(E[2])/K) \cong S_3 \) or \( A_3 \), and if \( \dim_2(\Sel_2(E/K)) \ge 2 \), then $E$ has a quadratic twist \( E^\chi \) such that  
\[
\dim_2(\Sel_2(E^\chi/K)) = \dim_2(\Sel_2(E/K)) - 2.
\]  
Later, in \cite[Theorem~1]{2sel}, Yu proved that there exist infinitely many quadratic characters \( \chi \in \Hom(G_K, \{\pm1\}) \) such that
\[
\dim_2(\Sel_2(E^\chi/K)) = \dim_2(\Sel_2(E/K)) + 2,
\]
without any assumption on the Galois group \( G(K(E[2])/K) \).

In \cite{ptwist}, Mazur, Rubin and Silverberg provided a concrete definition of a $p$-twist of an elliptic curve over a number field $K$ and investigated its properties.

Furthermore, in \cite{kmr2}, Klagsbrun, Mazur and Rubin proved that there exist infinitely many cyclic characters \( \chi \in \text{Hom}(G_K, \mu_p) \) satisfying
\[
\dim_p(\text{Sel}_p(E/K,\chi)) = \dim_p(\text{Sel}_p(E/K)) + 2,
\]
under the following assumptions:
\begin{itemize}
    \item \( E[p] \) is a simple \( G_K \)-module,
    \item $\Hom_{G_{K(\mu_p)}}(E[p],E[p])=\Fp$,
    \item \( H^1(K(E[p])/K, E[p]) = 0 \).
\end{itemize}

In this paper, we establish the same result under different assumptions.  

\begin{thm} \label{thm1.1}
Let \( E \) be an elliptic curve over a number field \( K \). Suppose that one of the following conditions holds:  
\begin{enumerate}

    \item \( E(K)[p] \neq 0 \), or  \label{A1}
    \item \( [K(E[p]) : K(\mu_p)] \nmid p \), or \label{A2} 
    \item \( K = K(\mu_p) \).  \label{A3}
\end{enumerate} Then, for every positive integer \( n \), there exist infinitely many characters \( \chi \in \operatorname{Hom}(G_K, \mu_p) \) such that  
\[
\dim_p \Sel_p(E/K, \chi) = \dim_p \Sel_p(E/K) + 2n.
\]

\end{thm}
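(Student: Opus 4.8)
The plan is to use the now-standard strategy (cf.\ \cite{kmr2}) of comparing $\Sel_p(E/K,\chi)$, as $\chi$ varies, against a fixed Selmer group by adjusting local conditions one prime at a time. One realizes each $\Sel_p(E/K,\chi)$ as the Selmer group of a fixed Cartier self-dual $G_K$-module $T$ built from $E[p]$ (so $T^*\cong T$ via the Weil pairing), taken with respect to a Selmer structure $\L_\chi$ whose local conditions coincide with those of a fixed reference structure $\L$ at every place away from the finitely many primes ramifying in $\chi$, and at those primes are the conditions forced by the local behaviour of $\chi$; for $\chi=1$ this recovers $\Sel_p(E/K)$. With this in hand the theorem follows by induction on $n$ from a single assertion: \emph{for every $\chi$ there are infinitely many ways to enlarge its ramification set, by one or two new auxiliary primes, so that $\dim_p\Sel_p(E/K,\chi)$ goes up by exactly $2$.} Chaining $n$ such steps, with the new primes chosen disjoint from all previously used ones, yields infinitely many $\chi$ with $\dim_p\Sel_p(E/K,\chi)=\dim_p\Sel_p(E/K)+2n$.

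For the inductive step the engine is global duality --- the Poitou--Tate exact sequence and the Greenberg--Wiles formula. Enlarging the ramification set of $\chi$ by a prime $\ell$ changes $\L_\chi$ only at $\ell$; the Greenberg--Wiles formula controls the effect on $\dim_p\Sel(\L_\chi)-\dim_p\Sel(\L_\chi^*)$ through the local dimension at $\ell$, and the Poitou--Tate sequence relates $\Sel(\L_{\chi'})$ to $\Sel(\L_\chi)$ through the ``relaxed-at-$\ell$'' and ``strict-at-$\ell$'' Selmer groups, i.e.\ through the localizations at $\ell$ of the relevant global classes. One picks $\ell$ --- via the Chebotarev density theorem applied to the compositum of $K(E[p],\mu_p)$ with the finite extensions cut out by the finitely many Selmer and dual Selmer classes in play --- so that: $\ell$ carries a ramified character into $\mu_p$ (so that $\chi$ may be made to ramify there); the new local condition at $\ell$ destroys none of the current Selmer classes; and the relevant localization maps have the ranks that make the net change equal to $+2$. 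Because the increments in this process are even --- a reflection of the self-duality of the Selmer structure --- and infinitely many primes satisfy the Chebotarev condition, the assertion of the previous paragraph follows.

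The heart of the matter --- and the only place the hypotheses (i), (ii), (iii) enter --- is showing that primes $\ell$ with the prescribed Frobenius exist, which by Chebotarev is a group-theoretic question about the action of $\Gal(K(E[p],\mu_p)/K)\subseteq\GL_2(\F_p)$ on $E[p]$, together with control of $H^1(K(E[p])/K,E[p])$ (which, unlike in \cite{kmr2}, need not vanish when the image is small). What one needs is an element $\sigma$ of the appropriate Galois group that acts trivially on $\mu_p$, fixes a nonzero vector of $E[p]$, and separates the finitely many cohomology classes under consideration. When $K=K(\mu_p)$ this is immediate; when $E(K)[p]\neq 0$ the rational $p$-torsion line supplies the fixed vector; and when $[K(E[p]):K(\mu_p)]\nmid p$ --- equivalently, the image of $\Gal(K(E[p])/K(\mu_p))$ in $\SL_2(\F_p)$ is neither trivial nor a group of order $p$ --- one produces $\sigma$ by running through the subgroups of $\SL_2(\F_p)$ that can occur and, in each case, handling $H^1(K(E[p])/K,E[p])$ (enlarging $K(E[p])$ to absorb any nonzero class). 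Carrying out this subgroup-by-subgroup analysis with its attendant cohomology is the step I expect to be the main obstacle; once it is in place, the rest is the routine Poitou--Tate bookkeeping of the inductive step.
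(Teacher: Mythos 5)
Your high-level skeleton (view all $\Sel_p(E/K,\chi)$ inside $H^1(K,E[p])$, change the local condition at one auxiliary prime at a time, use duality/parity plus Chebotarev, and induct to get $+2n$) is indeed the paper's framework, but the proposal has a genuine gap at the step that actually produces the increase. After choosing $\q\in\P_2$ with $\loc_\q(\Sel_p(E/K,\chi'))=0$ and replacing the unramified condition at $\q$ by a ramified one, Theorem \ref{dual} and the parity result only give that the rank change is $0$ or $2$; whether it is $2$ depends on whether the $2$-dimensional ramified condition $\gamma_\q(\chi_\q)$ meets the image under $\loc_\q$ of the relaxed-at-$\q$ Selmer group, and this is \emph{not} determined by the Frobenius class of $\q$, so it cannot be arranged by "picking $\ell$ via Chebotarev so that the localization maps have the right ranks," which is all your sketch offers. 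The paper needs a separate argument here: in case (i) it uses the hypothesis $E(K)[p]\neq 0$ to produce a nonzero $P\in E^{\chi''}(K)[\p]$ whose Kummer image lies in $\Sel_p(E/K,\chi'\chi)$ and localizes nontrivially at $\q$ (via $E^{\chi''}(K_\q)/\p E^{\chi''}(K_\q)\cong E^{\chi''}[\p]$ for ramified $\chi''_\q$, Remark \ref{canlem}), forcing strict growth and hence $+2$; in case (ii) it instead imports the purely local computation of \cite[Prop.~7.2]{kmr2}, which exhibits specific local characters $\psi_\q$ realizing the jump.

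You also misidentify where the hypotheses (i)--(iii) enter. They are not needed to find primes with prescribed Frobenius: the paper simply takes $\Frob_\q$ trivial on a compositum containing $K(E[p])$, the field cut out by the Selmer classes, and $K(\sqrt[p]{\O_{K,\Sigma}^\times})$, which exists unconditionally; no subgroup-by-subgroup analysis of $\SL_2(\F_p)$ or control of $H^1(K(E[p])/K,E[p])$ (the mechanism of \cite{kmr2}) occurs anywhere. Hypothesis (i) functions via the torsion-point argument above, and hypothesis (ii) is used only to solve the local-to-global extension problem for characters: having fixed the good local character $\psi_{\q}$, one must produce a global $\chi\in\cC(K)$ with $\chi_{\q}=\psi_{\q}$, trivial on $\Sigma$, and unramified elsewhere, and the obstruction lies in the $\Sigma$-units; Lemma \ref{Alem1} ($\A_2\subset\A_1$, proved using an element of $G(K(E[p])/K(\mu_p))$ of order prime to $p$, which exists exactly when $[K(E[p]):K(\mu_p)]\nmid p$) removes it, while its failure is exactly why the remaining case is excluded (Remark \ref{caserem}); hypothesis (iii) just reduces to (i) or (ii). Your proposal never addresses this globalization obstruction at all, and the $\SL_2(\F_p)$-plus-cohomology analysis you defer as "the main obstacle" is both not carried out and not how the hypotheses actually operate, so the argument is incomplete precisely at the two decisive points.
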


If \( E(K)[p] \neq 0 \), we strategically choose a prime \( \mathfrak{q} \) of good reduction and a certain global character \( \chi \) that is ramified at \( \mathfrak{q} \). Considering the Selmer group \( \operatorname{Sel}_p(E/K, \chi) \) in this setting, we obtain  
\[
\operatorname{loc}_{\mathfrak{q}}(\operatorname{Sel}_p(E/K)) = 0, \quad \text{but} \quad \operatorname{loc}_{\mathfrak{q}}(\operatorname{Sel}_p(E/K, \chi)) \neq 0,
\] where $\loc_\q : H^1(K,E[p]) \to H^1(K_\q,E[p])$ is the restriction map. See Definition \ref{defnsel}.
We remark that the existence of a nontrivial $p$-torsion point of $E(K)$ is crucially used. See the proof of Theorem \ref{firstthm} for details.
By the Poitou-Tate global duality, we conclude that  
\[
\dim_p(\Sel_p(E/K,\chi)) = \dim_p(\Sel_p(E/K)) + 2.
\]

Subsequently, we prove the remaining cases in Theorem \ref{thm1.1}. 
If \eqref{A2} holds, \( E/K \) satisfies some conditions as in Lemma \ref{Alem1}, which makes it easier to construct the desired global character. If \eqref{A3} holds, $E/K$ satisfies either \eqref{A1} or \eqref{A2}.

Observe that our assumption fails when all of the following conditions hold:  
\[
K \subsetneq K(\mu_p), \quad [K(E[p]):K(\mu_p)] \mid p, \quad \text{and} \quad E(K)[p] = 0.
\]
In such a case, it is difficult to construct a global character from a certain local character. See Remark \ref{caserem} for details.

Note that the Selmer group \( \Sel_p(E/K, \chi) \) is not the usual \( p \)-Selmer group of \( E/K \) or \( E^\chi/K \), but it is the $\p$-Selmer group of the abelian variety \( E^\chi/K \) (see Definition \ref{defnsel} or \cite[Proposition 5.9]{kmr1}). However, there is a relation between \( \dim_p \Sel_p(E/K, \chi) \) and \( \dim_p \Sel_p(E^\chi/K) \), where \( \Sel_p(E^\chi/K) \) is the \( p \)-Selmer group of the abelian variety \( E^\chi/K \). This leads to the following result (Corollary \ref{corollary}).

\begin{cor}
  Suppose that one of the following conditions holds:  
\begin{enumerate}
    \item \( E(K)[p] \neq 0 \), or 
    \item \( [K(E[p]) : K(\mu_p)] \nmid p \), or 
    \item \( K = K(\mu_p) \).
\end{enumerate}  For every positive integer $n$, there exist infinitely many characters $\chi \in \Hom(G_K,\mu_p)$ satisfying $\dim_p(\Sel_p(E^\chi/K)) \ge n$.
\end{cor}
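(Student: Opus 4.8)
The plan is to deduce the Corollary from Theorem \ref{thm1.1} together with the relationship between $\dim_p \Sel_p(E/K,\chi)$ and $\dim_p \Sel_p(E^\chi/K)$ alluded to in the paragraph preceding the statement. First I would fix $n$ and apply Theorem \ref{thm1.1} to the integer $d = \dim_p \Sel_p(E/K) + 2m$ for a suitable $m$ chosen large enough that $2m \ge n$ (and at least $2m \ge n - \dim_p\Sel_p(E/K)$, so that $m=\lceil n/2\rceil$ works unconditionally). This produces infinitely many characters $\chi \in \Hom(G_K,\mu_p)$ with $\dim_p \Sel_p(E/K,\chi) = \dim_p \Sel_p(E/K) + 2m$, which in particular is $\ge n$.

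Next I would invoke the comparison inequality between $\Sel_p(E/K,\chi)$ and $\Sel_p(E^\chi/K)$. Since $\Sel_p(E/K,\chi)$ is by Definition \ref{defnsel} (equivalently \cite[Proposition~5.9]{kmr1}) the $\p$-Selmer group of the abelian variety $E^\chi/K$, and the $p$-Selmer group $\Sel_p(E^\chi/K)$ differs from it only by controlled local and global factors coming from the ramification of $\chi$ and the structure of $E^\chi[p]$ versus $E^\chi[\p]$, one has a bound of the form $\dim_p \Sel_p(E^\chi/K) \ge \dim_p \Sel_p(E/K,\chi) - c$ for a constant $c$ independent of $\chi$ (indeed, the relevant statement is recorded as Corollary \ref{corollary} in the body, whose proof supplies the precise inequality). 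Choosing $m$ larger still if necessary to absorb this constant $c$, we obtain $\dim_p \Sel_p(E^\chi/K) \ge n$ for all of the infinitely many $\chi$ produced above.

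The only genuine content beyond bookkeeping is establishing, or citing precisely, the inequality relating $\dim_p \Sel_p(E^\chi/K)$ to $\dim_p \Sel_p(E/K,\chi)$; this is where I expect the main (though still routine) work to lie, and it is presumably carried out in the proof of Corollary \ref{corollary} in the body of the paper via the exact sequences comparing $E^\chi[\p]$, $E^\chi[p]$, and the associated Selmer structures, following \cite{kmr1}. Everything else is a matter of choosing the parameter $n$ in Theorem \ref{thm1.1} large enough and noting that an infinite family remains infinite after discarding nothing. I would therefore write the proof as: apply Theorem \ref{thm1.1} with $d$ large, then apply the Selmer comparison bound, and conclude.
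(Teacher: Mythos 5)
Your overall strategy is exactly the paper's: apply Theorem \ref{thm1.1} with the twist increase $2m$ taken large, then pass from $\Sel_p(E/K,\chi)$ to $\Sel_p(E^\chi/K)$ via a comparison bound with a constant independent of $\chi$. The one substantive ingredient, however, is asserted rather than proved, and your justification for it is circular: you cite ``the proof of Corollary \ref{corollary}'' --- i.e.\ the statement you are proving --- for the inequality $\dim_p \Sel_p(E^\chi/K) \ge \dim_p \Sel_p(E/K,\chi) - c$. In the paper this bound comes from the remark immediately preceding the corollary, and it is short: since $\p^{p-1}=(p)$ in the ring of integers $\O$ of $\Q(\mu_p)$, the exact sequence $0 \to E^\chi[\p] \to E^\chi[p] \to E^\chi[\p^{p-2}] \to 0$ of $G_K$-modules yields, after taking cohomology and imposing the Selmer conditions, an exact sequence $0 \to H \to \Sel_\p(E^\chi/K) \to \Sel_p(E^\chi/K)[\p] \to 0$ in which $H$ is a subgroup of $E^\chi(K)[\p^{p-2}]/\p E^\chi(K)[p]$, so that $\dim_p H \le \dim_p E^\chi[p] = 2(p-1)$; combined with the identification $\Sel_\p(E^\chi/K)=\Sel_p(E/K,\chi)$ of \cite[Proposition 5.9]{kmr1}, this gives the bound with $c=2(p-1)$, uniform in $\chi$. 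You did name the right mechanism (the comparison of $E^\chi[\p]$ with $E^\chi[p]$ following \cite{kmr1}), so the gap is one of execution rather than idea: to make the argument non-circular you must carry out this exact-sequence computation, or cite it from a source other than the statement being proved. With that inserted, your bookkeeping --- choose $m$ so that $\dim_p\Sel_p(E/K)+2m-2(p-1)\ge n$ and note that the infinite family of $\chi$ from Theorem \ref{thm1.1} survives --- is correct and matches the paper.
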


As mentioned above, Mazur and Rubin \cite{H10}, and later Yu \cite{2sel}, proved that if \( G(K(E[2])/K) \cong S_3 \) or \( A_3 \), then for each \( i = \pm 2 \), there exist infinitely many quadratic characters \( \chi \) such that
\[
\dim_2(\Sel_2(E^\chi/K)) = \dim_2(\Sel_2(E/K)) + i,
\]
provided that \( \dim_2(\Sel_2(E/K)) \ge 2 \) when \( i = -2 \).

If $E/K$ has no constant $2$-Selmer parity(See \cite[Definition 9.1]{H10}), for example, $K$ has a real embedding, for $i=-1,1$, there are infinitely many quadratic characters $\chi$ such that $$\dim_2(\Sel_2(E^\chi/K))=\dim(\Sel_2(E/K))+i.$$ 
However, if $E/K$ has constant $2$-Selmer parity, this is impossible. Moreover, in \cite{bddselmer}, Klagsbrun presents an infinite family of elliptic curves $E$ defined over $K$ such that $E(K)[2]\ne 0$ and $\dim_2\Sel(E^\chi/K)\ge r_2$ for every quadratic character $\chi\in\Hom(G_K,\mu_2)$, where $r_2$ is the number of complex embeddings in $K$. This result implies that decreasing the Selmer rank is not always possible. For these reasons, we focus on increasing the Selmer rank by 2.

Our methods begin with those of \cite{2sel} and \cite{H10}. We view all the Selmer groups $\Sel_p(E/K,\chi)$ as subspaces of $H^1(K,E[p])$ as in \cite{kmr1}. We construct $\chi$ so that the local conditions defining $\Sel_p(E/K,\chi)$ and $\Sel_p(E/K)$ agree everywhere except at one place.
We then show that our $\chi$ satisfies $$\dim_p(\Sel_p(E/K, \chi)) = \dim_p(\Sel_p(E/K)) + 2.$$ Chebotarev’s density theorem ensures that there exist infinitely many such characters $\chi$. The strategy is iteratively extended to achieve larger rank increases, specifically by $+2n$ through induction.

\section{Selmer groups}
In this section, we present the lemmas required for the proof of our main theorems. Although these lemmas are not original to this work, we have included them for the reader's convenience.
Fix a prime $p\ge 3$. Let $K$ be a number field and $v$ a place of $K$. Define $\mathcal{C}(K) := \Hom(G_K,\mu_p)$ and $\cC(K_v):=\Hom(G_{K_v},\mu_p)$.
In this case, local class field theory provides a canonical identification $\cC(K_v)=\Hom(K_v^\times,\mu_p)$
and a local character is ramified if and only if it is nontrivial on the local units $\O_{K_v}^\times.$
The subsequent definitions are from \cite[Definition 5.1 and 5.3]{kmr1}.

Let $\Sigma$ be a finite set of places of $K$ containing
all places where $E$ has bad reduction, all places dividing $p\infty$,
and sufficiently large such that
\begin{itemize}
\item
the primes in $\Sigma$ generate the ideal class group of $K$,
\item
the natural map $\O_{K,\Sigma}^\times/(\O_{K,\Sigma}^\times)^p \to 
   \prod_{v\in\Sigma} K_v^\times/(K_v^\times)^p$ is injective.
\end{itemize}

\begin{rem}
 The set $\Sigma$ can always be enlarged to satisfy the above conditions, as shown in \cite[Lemma 6.1]{kmr1}.
\end{rem}
\begin{defn} \label{defnsel}
    Let $\chi \in \mathcal{C}(K)$ (or $\mathcal{C}(K_v)$) be nontrivial. Let $L$ denote the cyclic extension of $K$ (resp., $K_v$) corresponding to $\chi$. Define $$E^\chi := \ker(\Res^L_K(E) \rightarrow E)$$ where $\Res^L_K(E)$ denotes the Weil restriction of scalars of $E$ from $L$ to $K$.

    Let $\O$ denote the ring of integers of the cyclotomic field of $p$-th roots of unity, and let $\p$ denote the unique prime of $\O$ lying above $p$. Then there exists a canonical $G_K$-isomorphism $E^\chi[\p] \cong E[p]$. (See \cite[Lemma 5.2]{kmr1})

    For a place $v$ of $K$, let $$\loc_v : H^1(K,E[p]) \too H^1(K_v,E[p])$$ denote the restriction map of group cohomology and if $c\in H^1(K,E[p]),$ denote $c_v:=\loc_v(c)$.

    For a place $v$ of $K$, let $\chi$ denote an element of $\cC(K_v)$. Define $$\gamma_v(\chi):=\image(E^\chi(K_v)/{\p E^\chi(K_v)} \rightarrow H^1(K_v,E^\chi[\p])\cong H^1(K_v,E[p])).$$ 

    For a non-archimedean place $v$ with residue characteristic different from $p$, if $E$ has a good reduction at $v$, define $$\Hu(K_v,E[p]) := H^1(K_v^{\mathrm{ur}}/{K_v},E[p]),$$ where $K_v^{\ur}$ denotes the maximal unramified extension of $K_v$.

    For $\chi \in \cC(K)$, define $$\Sel_p(E/K,\chi):=\{c \in H^1(K,E[p]) : c_v \in \gamma_v(\chi_v) \text{ for all $v$} \},$$ where $\chi_v$ is the restriction of $\chi$ to $G_{K_v}$.

    If $\chi\in\cC(K)$ is trivial, define $\Sel_p(E/K):=\Sel_p(E/K,\chi).$

    Let $S$ be a set of primes of $K$. For $\psi=(\psi_v)_{v\in S} \in \prod_{v\in S} \cC(K_v)$, define 
    \begin{align*}
      \Sel_p(E/K,\psi) := \{c \in H^1(K,E[p]) : c_v \in \gamma_v(\psi_v)& \text{ for $v\in S$}, \\&c_v \in \gamma_v(1_v) \text{ for $v\notin S$} \}.
    \end{align*}

    Define $r_p(E):=\dim_p(\Sel_p(E/K))$ and $r_p(E,\chi):=\dim_p(\Sel_p(E/K,\chi)).$
\end{defn}

\begin{defn}
    For $1 \le i \le 2$, define \begin{align*}
    \P& :=\{\q : \q\notin\Sigma\}.\\
        \P_i& := \{\q\in\P : \mu_p \subset K_\q \text{  and } \dim_p \Hu(K_\q,E[p]) = i \}\\
            \P_0& :=\{\q : \q \notin\Sigma\cup\P_1\cup\P_2 \}.
    \end{align*}
    Observe that $\P=\P_0\cup\P_1\cup\P_2.$
\end{defn}

\begin{prop} \label{ramlocal}
    Assume that $v\nmid p\infty$, $E$ has good reduction at $v$ and $\chi_v$ is ramified.
    Then $\gamma_v(\chi_v) \cap \Hu(K_v,E[p])=0$.
    \end{prop}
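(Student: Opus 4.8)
The plan is to unwind $\gamma_v(\chi_v)$ through the cyclic extension $L$ cut out by $\chi_v$, use that a putative common class is unramified in order to push the corresponding point of the twist into the formal group of $E$, and then invoke the fact that the twisting operator is invertible there. Concretely: since $\chi_v$ is nontrivial and ramified and $v\nmid p\infty$, the fixed field $L$ of $\ker\chi_v$ is a totally (tamely) ramified cyclic extension of $K_v$ of degree $p$; write $\Gal(L/K_v)=\langle\sigma\rangle$ and identify $\sigma$ with a chosen generator $\zeta_p$ of $\mu_p$. From the definition of $E^{\chi_v}$ one checks, as $\O$-modules, that $E^{\chi_v}(K_v)=\ker\bigl(\mathrm{Tr}_{L/K_v}\colon E(L)\to E(K_v)\bigr)$, with $\p=(1-\zeta_p)$ and $\lambda:=1-\zeta_p$ acting on $E^{\chi_v}(K_v)$ as $1-\sigma$; moreover, under $E^{\chi_v}[\p]\cong E[p]$, the subspace $\gamma_v(\chi_v)$ is the image of the injective Kummer map $\delta\colon E^{\chi_v}(K_v)/\lambda E^{\chi_v}(K_v)\hookrightarrow H^1(K_v,E[p])$ arising from $0\to E^{\chi_v}[\p]\to E^{\chi_v}\xrightarrow{\ \lambda\ }E^{\chi_v}\to0$. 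The same description persists after base change to $K_v^{\ur}$, with $L^{\ur}=LK_v^{\ur}$ in place of $L$ and $\sigma$ now generating $\Gal(L^{\ur}/K_v^{\ur})$.

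Now take $c\in\gamma_v(\chi_v)\cap\Hu(K_v,E[p])$ and write $c=\delta(x)$ with $x\in E^{\chi_v}(K_v)\subseteq E(L)$; the goal is to show $x\in\lambda E^{\chi_v}(K_v)$, which forces $c=0$. Since $c$ is unramified it restricts to $0$ on the inertia subgroup $I_{K_v}=G_{K_v^{\ur}}$; because $\delta$ commutes with restriction and the Kummer map over $K_v^{\ur}$ is injective, this forces $x\in\lambda E^{\chi_v}(K_v^{\ur})=(1-\sigma)E^{\chi_v}(K_v^{\ur})\subseteq(1-\sigma)E(L^{\ur})$, say $x=(1-\sigma)w_1$ with $w_1\in E(L^{\ur})$. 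As $L^{\ur}/K_v^{\ur}$ is totally ramified, $\sigma$ acts trivially on the residue field $\overline{k_v}$, hence on $\widetilde E(\overline{k_v})$, so $(1-\sigma)E(L^{\ur})$ lies in the kernel of reduction; since moreover $x\in E(L)$ and $L/K_v$ is totally ramified (so the residue field of $L$ is still $k_v$), we conclude $x\in\widehat E(\m_L)$, the subgroup of $E(L)$ lying in the formal group.

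Finally, $\widehat E(\m_L)$ is a pro-$\ell$ group with $\ell=\mathrm{char}\,k_v\neq p$, hence uniquely $p$-divisible, so the orthogonal idempotents $e_0:=\tfrac1p\sum_{i=0}^{p-1}\sigma^i$ and $e_1:=1-e_0$ of $\Z[1/p][\langle\sigma\rangle]$ act on it and split it as $\widehat E(\m_{K_v})\oplus e_1\widehat E(\m_L)$, with $1-\sigma$ acting on $e_1\widehat E(\m_L)$ as the unit $1-\zeta_p\in\O[1/p]^\times$, hence invertibly. Because $x\in E^{\chi_v}(K_v)=\ker\mathrm{Tr}_{L/K_v}$ we have $\sum_i\sigma^ix=0$, i.e. $e_0x=0$, so $x\in e_1\widehat E(\m_L)$ and therefore $x=(1-\sigma)w$ for a unique $w\in e_1\widehat E(\m_L)\subseteq E(L)$; this $w$ satisfies $\mathrm{Tr}_{L/K_v}(w)=\sum_i\sigma^iw=0$, hence $w\in E^{\chi_v}(K_v)$ and $x=\lambda w\in\lambda E^{\chi_v}(K_v)$. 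Thus $c=\delta(x)=0$, and $\gamma_v(\chi_v)\cap\Hu(K_v,E[p])=0$.

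I expect the main obstacle to be the bookkeeping in the first paragraph: extracting the $\O$-module structure of $E^{\chi_v}(K_v)$, the action of $\p$, and the explicit map defining $\gamma_v(\chi_v)$ from the definition of the twist, and checking that all of this is compatible with base change to $K_v^{\ur}$. Once that is in place, the geometric heart of the argument — unramifiedness forces the point into the formal group, where $1-\zeta_p$ is a unit — is short. It is also worth recording why Tate local duality alone does not suffice: it only shows that $\gamma_v(\chi_v)$ and $\Hu(K_v,E[p])$ are maximal isotropic subspaces of equal dimension, and two such subspaces can still meet nontrivially, so the input about the reduction of $E^{\chi_v}$ (as above) is genuinely needed.
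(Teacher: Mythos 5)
Your argument is correct, but note that the paper does not actually contain a proof of this proposition: it simply cites Mazur--Rubin, \emph{Diophantine stability} [ds, Proposition 7.8], so there is no in-paper argument to match yours against line by line. What you give is a sound self-contained proof in the same spirit as the Mazur--Rubin local analysis. The chain of steps checks out: the trace-kernel description $E^{\chi_v}(K_v)=\ker(\mathrm{Tr}_{L/K_v}\colon E(L)\to E(K_v))$ with $\lambda=1-\zeta_p$ acting as $1-\sigma$ follows from the definition $E^\chi=\ker(\Res^L_K E\to E)$ (the map on points being the trace) together with the $\O$-action of [ptwist]/[kmr1]; compatibility of the Kummer map with restriction to $G_{K_v^{\ur}}$ and its injectivity over $K_v^{\ur}$ are standard; total ramification of $L^{\ur}/K_v^{\ur}$ makes $(1-\sigma)E(L^{\ur})$ land in the kernel of reduction, so an unramified class forces $x\in\widehat E(\m_L)$; and since $\widehat E(\m_L)$ is pro-$\ell$ with $\ell\neq p$, the idempotent decomposition over $\Z[1/p][\langle\sigma\rangle]$ applies and $1-\sigma$ is invertible on the trace-zero part, giving $x\in\lambda E^{\chi_v}(K_v)$ and hence $c=0$. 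Your closing remark is also apt: Tate duality only gives that the two subspaces are maximal isotropic of the same dimension, so some genuinely local input (good reduction, $v\nmid p$, ramifiedness of $\chi_v$) is needed, and your formal-group argument supplies exactly that. The only points worth writing out carefully if this were to replace the citation are the ones you flagged yourself: that the structural map $\Res^L_{K_v}(E)\to E$ is the trace on points, that the isomorphism $E^{\chi}[\p]\cong E[p]$ is the one used in the definition of $\gamma_v$, and that everything is compatible with base change to $K_v^{\ur}$ (here the ambiguity in identifying $\sigma$ with a primitive root of unity is harmless, since all $1-\zeta_p^a$ generate $\p$).
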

\begin{proof}
    See \cite[Proposition 7.8]{ds}.
\end{proof}

\begin{prop} \label{localcond}
    Assume that $v\nmid p\infty$, $E$ has good reduction at $v$ and $\chi_v$ is unramified.
    Then \begin{itemize}
        \item $\gamma_v(\chi_v)=\Hu(K_v,E[p])$,
        \item $\dim_p \gamma_v(\chi_v) = \dim_p E[p]^{\Frob_v=1}$, where $\Frob_v$ denotes the Frobenius generator,
        \item there exists an isomorphism $\Hu(K_v,E[p]) \cong E[p]/{(\Frob_v-1)E[p]}$ given by evaluating cocycles at $\Frob_v$.
    \end{itemize}
\end{prop}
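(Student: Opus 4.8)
The plan is to reduce everything to two standard facts about unramified cohomology: the inflation–restriction sequence for the unramified extension $K_v^{\ur}/K_v$, and the fact that $E[p]$ is an unramified $G_{K_v}$-module when $E$ has good reduction at $v$ and $v\nmid p$. First I would recall that by definition $\Hu(K_v,E[p]) = H^1(K_v^{\ur}/K_v, E[p]) = H^1(\hat{\Z}, E[p])$, where $\hat{\Z} = G(K_v^{\ur}/K_v)$ is topologically generated by $\Frob_v$. For a finite module $M$ with continuous action of $\hat{\Z}$, evaluation of a cocycle at $\Frob_v$ gives the standard isomorphism $H^1(\hat{\Z},M) \cong M/(\Frob_v - 1)M$; applying this with $M = E[p]$ yields the third bullet, and taking dimensions gives $\dim_p \Hu(K_v,E[p]) = \dim_p E[p]/(\Frob_v-1)E[p]$. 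Since $E[p]$ is a $2$-dimensional $\F_p$-vector space and $\Frob_v - 1$ is a linear endomorphism of it, the rank-nullity theorem gives $\dim_p E[p]/(\Frob_v-1)E[p] = \dim_p \ker(\Frob_v - 1) = \dim_p E[p]^{\Frob_v = 1}$, which is the second bullet.

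For the first bullet I would argue that $\Hu(K_v,E[p]) \subseteq \gamma_v(\chi_v)$ and that both sides have the same $\F_p$-dimension. The inclusion: when $\chi_v$ is unramified, $E^\chi[\p] \cong E[p]$ as $G_{K_v}$-modules and one checks that $\gamma_v(\chi_v)$ is the image of $E^\chi(K_v)/\p E^\chi(K_v)$ under the Kummer map, which for an abelian variety with good reduction (and $\chi_v$ unramified, so $E^\chi$ still has good reduction at $v$) equals the unramified subgroup $H^1_{\ur}(K_v, E^\chi[\p])$; transporting along the isomorphism identifies this with $\Hu(K_v,E[p])$. For the dimension count, the local Euler characteristic formula (or the well-known fact that for $v\nmid p$ the image of the local Kummer map has dimension equal to $\dim_p E[p]^{\Frob_v=1}$, which for an elliptic curve with good reduction equals $\dim_p E(K_v)/pE(K_v)$ by the connectedness of the reduction) shows $\dim_p \gamma_v(\chi_v) = \dim_p E[p]^{\Frob_v=1} = \dim_p \Hu(K_v,E[p])$, forcing equality.

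The main obstacle, and the only place requiring genuine care rather than bookkeeping, is verifying that $\gamma_v(\chi_v)$ really coincides with the unramified cohomology when $\chi_v$ is unramified — i.e., that twisting by an unramified character does not change the local image of the Kummer map once we identify $E^\chi[\p]$ with $E[p]$. This rests on $E^\chi$ having good reduction at $v$ (which follows because $\chi_v$ unramified means the cyclic extension $L/K_v$ it cuts out is unramified, and Weil restriction along an unramified extension of a good-reduction curve has good reduction) together with the compatibility of the canonical isomorphism $E^\chi[\p]\cong E[p]$ with the respective Kummer maps. I would cite \cite[Proposition 5.9]{kmr1} or the analogous local statement in \cite{kmr1} for this identification, so that the proposition becomes a clean consequence of the good-reduction case of the local Kummer description together with the elementary computation of $H^1(\hat\Z, E[p])$ above. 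In fact, since the paper says these lemmas are not original, I expect the shortest route is simply to invoke the relevant statements from \cite{kmr1} and \cite{ds} and supply only the elementary cohomological computation as above.
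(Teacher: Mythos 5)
Your proposal is correct and matches the paper's treatment: the paper proves this proposition simply by citing \cite[Lemmas 7.2 and 7.3]{ds}, and your argument (unramified $G_{K_v}$-action on $E[p]$ by good reduction, $H^1(\hat{\Z},M)\cong M/(\Frob_v-1)M$ via evaluation at $\Frob_v$, rank–nullity, and the identification of the local Kummer image with unramified cohomology for the good-reduction abelian variety $E^{\chi}$ via $E^\chi[\p]\cong E[p]$) is exactly the standard content behind those cited lemmas. As you anticipated, the paper takes the short route of invoking \cite{ds} directly rather than writing out these details.
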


\begin{proof}
    See \cite[Lemma 7.2 and 7.3]{ds}
\end{proof}

\begin{thm}\label{mod2}
    Let $\chi\in\cC(K)$. We have $$r_p(E,\chi)-r_p(E)\equiv \sum_{v}h_v(\chi_v) \text{   mod $2$},$$ where $\chi_v$ is the restriction of $\chi$ to $G_{K_v}$ and $$h_v(\chi_v):=\dim_p(\gamma_v(1_v)/({\gamma_v(\chi_v)\cap\gamma_v(1_v)})).$$
\end{thm}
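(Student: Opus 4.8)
The plan is to prove this by comparing the Selmer groups $\Sel_p(E/K,\chi)$ and $\Sel_p(E/K)$: their defining local conditions $\gamma_v(\chi_v)$ and $\gamma_v(1_v)$ agree outside a finite set of places, so I will sandwich both between a common smaller and a common larger Selmer group and then read off the parity from global Poitou--Tate duality. The structural input I would invoke --- not contained in this excerpt, but available from the cited sources in the spirit of Propositions \ref{ramlocal} and \ref{localcond} (see \cite{kmr1}, \cite{ds}) --- is that for every place $v$ the subspace $\gamma_v(\chi_v)\subseteq H^1(K_v,E[p])$ is its own orthogonal complement under the perfect local Tate pairing $\langle\,,\,\rangle_v$ given by cup product composed with the Weil pairing $E[p]\otimes E[p]\to\mu_p$, and likewise for $\gamma_v(1_v)$. (This is a form of local Tate duality for the abelian varieties $E$ and $E^\chi$.) By Proposition \ref{localcond}, $\gamma_v(\chi_v)=\gamma_v(1_v)=\Hu(K_v,E[p])$ at every place of good reduction where $\chi_v$ is unramified, and since $\chi$ ramifies at only finitely many places we have $h_v(\chi_v)=0$ for all but finitely many $v$, so $\sum_v h_v(\chi_v)$ is a finite sum.

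Next I would set $N_v:=\gamma_v(\chi_v)\cap\gamma_v(1_v)$ and $M_v:=\gamma_v(\chi_v)+\gamma_v(1_v)$ for each $v$; then $N_v=M_v=\gamma_v(1_v)$ for almost all $v$, and $M_v=N_v^{\perp}$ because each of $\gamma_v(\chi_v),\gamma_v(1_v)$ is its own orthogonal complement. Since both are isotropic, $\langle\,,\,\rangle_v$ descends to a pairing $\gamma_v(\chi_v)/N_v\times\gamma_v(1_v)/N_v\to\F_p$, and self-duality makes it perfect; in particular $\dim_p(\gamma_v(\chi_v)/N_v)=\dim_p(\gamma_v(1_v)/N_v)=h_v(\chi_v)$. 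Hence $U:=\bigoplus_v\gamma_v(\chi_v)/N_v$ and $U':=\bigoplus_v\gamma_v(1_v)/N_v$ are finite-dimensional $\F_p$-vector spaces in perfect duality, each of dimension $\sum_v h_v(\chi_v)$. With $S_N:=\{c\in H^1(K,E[p]):c_v\in N_v\text{ for all }v\}$, the localization maps
\[
\rho\colon\Sel_p(E/K,\chi)\too U,\qquad\rho'\colon\Sel_p(E/K)\too U',\qquad c\longmapsto(c_v\bmod N_v)_v
\]
are well defined with common kernel $S_N$, so $r_p(E,\chi)-r_p(E)=\dim_p(\image\rho)-\dim_p(\image\rho')$.

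The heart of the argument is to show that $\image\rho$ and $\image\rho'$ are \emph{exact} orthogonal complements of one another inside the dual pair $(U,U')$. The inclusion $\image\rho\subseteq(\image\rho')^{\perp}$ is the easy half: it is the global reciprocity law $\sum_v\langle c_v,c'_v\rangle_v=0$, valid for all $c,c'\in H^1(K,E[p])$, evaluated on $c\in\Sel_p(E/K,\chi)$ and $c'\in\Sel_p(E/K)$. The reverse inequality $\dim_p(\image\rho)+\dim_p(\image\rho')\ge\dim_p U$ is exactly global Poitou--Tate duality for the self-dual module $E[p]$, applied to the Selmer structures with local conditions $\{N_v\}$ and $\{M_v\}=\{N_v^{\perp}\}$: the nine-term Poitou--Tate sequence computes the cokernel of $S_N\hookrightarrow S_M$ in terms of $\bigoplus_v M_v/N_v$, and tracking $\Sel_p(E/K,\chi)$ and $\Sel_p(E/K)$ inside $S_M$ gives the claim (this is the comparison-of-Selmer-groups formula underlying \cite[Proposition~5.9]{kmr1}, cf.\ also \cite{ds}). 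Granting it, $\dim_p(\image\rho)+\dim_p(\image\rho')=\sum_v h_v(\chi_v)$, whence
\[
r_p(E,\chi)-r_p(E)=\dim_p(\image\rho)-\dim_p(\image\rho')=2\dim_p(\image\rho)-\sum_v h_v(\chi_v)\equiv\sum_v h_v(\chi_v)\pmod 2.
\]
I expect this orthogonality statement --- upgrading the reciprocity inclusion to an equality of dimensions --- to be the one genuinely nontrivial step; everything else (self-duality of the $\gamma_v$, the induced perfect pairing on the quotients, and the dimension count) is routine bookkeeping.
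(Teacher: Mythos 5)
Your reduction to the localization maps is fine as far as it goes: $h_v(\chi_v)=0$ outside a finite set $S$, the two Selmer groups have the common kernel $S_N$, the induced pairing $\gamma_v(\chi_v)/N_v\times\gamma_v(1_v)/N_v\to\F_p$ is perfect, and global reciprocity gives $\image\rho\subseteq(\image\rho')^{\perp}$. The gap is the step you yourself flag as the heart of the argument: the claimed \emph{exact} complementarity $\dim_p(\image\rho)+\dim_p(\image\rho')=\sum_v h_v(\chi_v)$ is false, and it does not follow from Poitou--Tate "applied to $\{N_v\}$ and $\{M_v\}$". What global duality actually gives is that the image $W$ of the \emph{relaxed} group $S_M=\{c: c_v\in M_v \text{ for } v\in S,\ c_v\in\gamma_v(1_v)\text{ else}\}$ in $\bigoplus_{v\in S}M_v/N_v=U\oplus U'$ is its own orthogonal complement; one then has $\image\rho=W\cap U$ and $\image\rho'=W\cap U'$, and a maximal isotropic $W$ need not satisfy $\dim(W\cap U)+\dim(W\cap U')=\dim U$. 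For instance, with dual bases $e_1,e_2$ of $U$ and $f_1,f_2$ of $U'$, the subspace $W=\langle e_1+f_2,\ e_2-f_1\rangle$ is maximal isotropic and meets both $U$ and $U'$ trivially, so the sum of dimensions is $0$ rather than $2$; only the congruence mod $2$ survives, and that congruence is exactly the content of the theorem, proved in the cited source via the Lagrangian property of $W$ together with a parity lemma for three maximal isotropic subspaces (here $W$, $U$, $U'$, using $U\cap U'=0$).

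That your exact statement cannot be true is also visible arithmetically from this very paper. In the proof of Theorem \ref{firstthm} one has a prime $\q\in\P_2$ with $\loc_\q(\Sel_p(E/K,\chi'))=0$ and a character ramified exactly at $\q$; your claim would force $\loc_\q$ of the twisted Selmer group to be all of $\gamma_\q(\chi_\q)$, i.e.\ the rank to jump by exactly $2$ for \emph{every} such ramified character, making the hypothesis $E(K)[p]\neq 0$, the diagram chase with the torsion point, and the appeal to \cite[Proposition 7.2]{kmr2} superfluous. In reality only the dichotomy $r_p(E,\chi'\chi)\in\{r_p(E,\chi'),\,r_p(E,\chi')+2\}$ is automatic (the case $W\neq\gamma_\q(\psi)$ for every $\psi$ does occur, since the special subspaces $\gamma_\q(\psi)$ form only $p+1$ of the maximal isotropic subspaces of $H^1(K_\q,E[p])$), and indeed under hypothesis \eqref{A3} all ramified local characters at a suitable $\q$ extend to global ones with unchanged conditions elsewhere, so exactness for all of them would force $W$ to equal several distinct $\gamma_\q(\psi)$ simultaneously. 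So you must replace the exact-complement claim by its parity version: keep your setup, prove via Poitou--Tate that $W$ is its own orthogonal complement in $U\oplus U'$, and then invoke (or prove) the standard parity lemma that for maximal isotropic subspaces $X,Y,Z$ of a nondegenerate quadratic space over $\F_p$ one has $\dim(Z\cap X)+\dim(Z\cap Y)\equiv\dim(X\cap Y)+\dim X \pmod 2$. With that correction your bookkeeping yields the theorem; note the paper itself does not reprove this statement but quotes \cite[Theorem 4.11]{kmr1}, whose proof follows the corrected route just described.
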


\begin{proof}
    See \cite[Theorem 4.11]{kmr1}.    
\end{proof}

\begin{defn}
    Let $E/K$ be an elliptic curve over a number field $K$ and let $\q$ be a place of $K$. The relaxed twisted $p$-Selmer group $\Sel_p(E/K,\chi)^\q$ at $\q$ and the strict twisted $p$-Selmer group $\Sel_p(E/K,\chi)_\q$ at $\q$ are defined by the following exact sequences : 
\begin{equation*}
\raisebox{19pt}{
\xymatrix@C=12pt@R=7pt{
0 \ar[r] & \Sel_p(E/K,\chi)^\q \ar[r] & H^1(K,E[p]) \ar^-{\dirsum{v\ne\q}\loc_v}[rr] 
   && \dirsum{v\ne\q}\displaystyle\frac{H^1(K_v,E[p])}{\gamma_v(\chi_v)} \\
0 \ar[r] & \Sel_p(E/K,\chi)_{\q} \ar[r] & \Sel_p(E/K,\chi) \ar^-{\loc_\q}[rr] 
   && \gamma_\q(\chi_\q).
}}
\end{equation*}
In particular, if $\chi$ is trivial, define $$\Sel_p(E/K)^\q:=\Sel_p(E/K,\chi)^\q \quad \text{and}\quad \Sel_p(E/K)_\q:=\Sel_p(E/K,\chi)_\q.$$
\end{defn}

\begin{thm}\label{dual}
Let $\q$ be a prime of $K$.
The images of the two right-hand maps in the following exact sequences are orthogonal complements of each other under the sum of the local Tate pairings.
\begin{equation}
\label{gdd}
\raisebox{19pt}{
\xymatrix@C=12pt@R=7pt{
0 \ar[r] & \Sel_p(E/K) \ar[r] & \Sel_p(E/K)^{\q} \ar^-{\loc_\q}[rr] 
   && \displaystyle\frac{H^1(K_\q,E[p])}{\gamma_\q(1_\q)} \\
0 \ar[r] & \Sel_p(E/K)_{\q} \ar[r] & \Sel_p(E/K) \ar^-{\loc_\q}[rr] 
   && \gamma_\q(1_\q).
}}
\end{equation}
In particular, $$\dim_p(\Sel_p(E/K)^{\q}) - \dim_p(\Sel_p(E/K)_{\q}) = \displaystyle\frac{1}{2}\dim_p(H^1(K_\q,E[p])).$$
\end{thm}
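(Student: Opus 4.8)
The plan is to deduce both assertions from Poitou--Tate global duality for the self-dual Galois module $E[p]$ with the Selmer system of local conditions $\{\gamma_v(1_v)\}_v$. The Weil pairing furnishes a $G_K$-isomorphism $E[p]\cong\Hom(E[p],\mu_p)$, so $E[p]$ is its own Cartier dual, and the local Tate pairings are perfect pairings $\langle\ ,\ \rangle_v\colon H^1(K_v,E[p])\times H^1(K_v,E[p])\to\F_p$. The one local input I would use is that $\gamma_v(1_v)=\image\bigl(E(K_v)/pE(K_v)\to H^1(K_v,E[p])\bigr)$ is its own orthogonal complement under $\langle\ ,\ \rangle_v$, for every place $v$; this is classical, and for $v\nmid p\infty$ of good reduction it is the statement that the unramified subgroup $\Hu(K_v,E[p])$ of Proposition~\ref{localcond} equals its own annihilator. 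Two consequences I would record: $\dim_p\gamma_v(1_v)=\frac{1}{2}\dim_p H^1(K_v,E[p])$ for every $v$, and $\langle\ ,\ \rangle_\q$ descends to a perfect pairing between $H^1(K_\q,E[p])/\gamma_\q(1_\q)$ and $\gamma_\q(1_\q)$ — this is the pairing with respect to which ``orthogonal complement'' in the statement is to be read.

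One direction is then the global reciprocity law: for $c\in\Sel_p(E/K)^\q$ and $d\in\Sel_p(E/K)$ one has $\sum_v\langle c_v,d_v\rangle_v=0$, and for $v\ne\q$ both $c_v$ and $d_v$ lie in the isotropic subgroup $\gamma_v(1_v)$, so all those terms vanish and $\langle c_\q,d_\q\rangle_\q=0$. Hence $\loc_\q\bigl(\Sel_p(E/K)^\q\bigr)$ is contained in the orthogonal complement of $\loc_\q\bigl(\Sel_p(E/K)\bigr)$ inside $H^1(K_\q,E[p])/\gamma_\q(1_\q)$.

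For the reverse inclusion and the dimension formula, I would invoke Poitou--Tate global duality in its ``relaxed at one prime'' form: starting from the Selmer system $\cF=\{\gamma_v(1_v)\}_v$, relax it to all of $H^1(K_\q,E[p])$ at $\q$; since each $\gamma_v(1_v)$ is self-dual, the dual Selmer system agrees with $\cF$ at every $v\ne\q$ and is the strict condition $0$ at $\q$. Poitou--Tate then yields the exact sequence
\[
0\to\Sel_p(E/K)\to\Sel_p(E/K)^\q\xrightarrow{\loc_\q}\frac{H^1(K_\q,E[p])}{\gamma_\q(1_\q)}\to\Sel_p(E/K)^\vee\to\Sel_p(E/K)_\q^\vee\to 0,
\]
where $(-)^\vee=\Hom(-,\F_p)$ and the map out of $H^1(K_\q,E[p])/\gamma_\q(1_\q)$ is the transpose of $\loc_\q\colon\Sel_p(E/K)\to\gamma_\q(1_\q)$ under the perfect pairing above. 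Exactness in the middle identifies $\loc_\q\bigl(\Sel_p(E/K)^\q\bigr)$ with the kernel of that transpose, namely $\loc_\q\bigl(\Sel_p(E/K)\bigr)^\perp$, which gives the reverse inclusion and hence the orthogonal-complement claim. Taking the alternating sum of $\F_p$-dimensions along the exact sequence yields
\[
\dim_p\Sel_p(E/K)^\q-\dim_p\Sel_p(E/K)_\q=\dim_p H^1(K_\q,E[p])-\dim_p\gamma_\q(1_\q)=\frac{1}{2}\dim_p H^1(K_\q,E[p]),
\]
the last equality by the first paragraph.

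The main obstacle is organizational rather than deep: assembling the correct five-term ``relaxed at one prime'' Poitou--Tate sequence with every term and every map correctly identified — in particular checking that self-duality of $E[p]$ really does make the dual Selmer system coincide with the original one at every $v\ne\q$, and that the connecting map is exactly the transpose of $\loc_\q$ on $\Sel_p(E/K)$. The only genuinely analytic ingredient, self-duality of the local conditions $\gamma_v(1_v)$ at the bad, archimedean and $p$-adic places, is standard.
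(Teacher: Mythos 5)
Your argument is correct. The paper itself gives no proof of this statement beyond a citation to \cite[Theorem 2.3.4]{kolysys}, and what you have written is exactly the standard Poitou--Tate proof of that cited result: self-duality of $E[p]$ via the Weil pairing, self-orthogonality of the Kummer images $\gamma_v(1_v)$ under the local Tate pairing (hence $\dim_p\gamma_v(1_v)=\tfrac12\dim_p H^1(K_v,E[p])$ and the induced perfect pairing between $H^1(K_\q,E[p])/\gamma_\q(1_\q)$ and $\gamma_\q(1_\q)$), the reciprocity law for the easy inclusion, and the relaxed-at-$\q$/strict-at-$\q$ five-term duality sequence for the reverse inclusion and the dimension count. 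The only point worth making explicit, which you correctly flag, is that the dual Selmer structure of the relaxed-at-$\q$ structure is the strict-at-$\q$ structure precisely because each $\gamma_v(1_v)$ is its own annihilator (including at places dividing $p\infty$ and places of bad reduction); with that in place the alternating sum of dimensions gives $\dim_p\Sel_p(E/K)^\q-\dim_p\Sel_p(E/K)_\q=\dim_p H^1(K_\q,E[p])-\dim_p\gamma_\q(1_\q)=\tfrac12\dim_p H^1(K_\q,E[p])$, as claimed. So your route is not genuinely different from the paper's; it simply supplies the proof that the paper outsources to \cite{kolysys}.
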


\begin{proof}
    See \cite[Theorem 2.3.4]{kolysys}
\end{proof}

\begin{rem} \label{dimrem}
    In Theorem \ref{dual}, observe that if $\q \in \P_i$, then $$\displaystyle\frac{1}{2}\dim_p(H^1(K_\q,E[p]))=i.$$ (See \cite[Proposition 7.2]{kmr2}.)
\end{rem}

\section{Increasing the Selmer rank}

In this section, we will divide it into three subsections. In the first, we will assume that \eqref{A1} holds; in the second, we will assume that \eqref{A2} holds; and in the third, we will assume that \eqref{A3} holds. For the rest of paper, let \( M:= K(E[p]) \) and \( \Sigma \) be as in Section~2.

\begin{defn}\label{defnN}
For an elliptic curve $E/K$, let $\bar{s}$ denote the image of $s \in \Sel_{p}(E/K)$ in the restriction map $$\Sel_{p}(E/K) \too \Sel_p(E/M) \subset \Hom(G_{M},E[p]).$$
Let $L_E$ be the fixed field of $\cap_{s \in \Sel_{p}(E/K)} \ker(\bar{s})$. Let $N_E$ be the Galois closure of $L_E K(\sqrt[p]{\O_{K,\Sigma}^\times})$ over $K$. Observe that $N_E$ is a finite $p$-power extension of $M$.
\end{defn}

The following lemma is frequently used to verify that $\q \in \P_i(E)$.

\begin{lem} \label{plem}
   Let $\q$ be a prime of $K$ such that $\q \notin \Sigma$, and let $\Frob_{\q} \in G(K(E[p])/K)$ denote a Frobenius element for some choice of prime above $\q$. Then :
    \begin{enumerate}
        \item $\q \in \P_2(E)$ if and only if $\Frob_{\q} = 1$;
        \item $\q \in \P_1(E)$ if and only if $\Frob_{\q}$ has order exactly $p$;
        \item $\q \in \P_0(E)$ if and only if $\Frob_{\q}^{p} \ne 1$.
    \end{enumerate}
\end{lem}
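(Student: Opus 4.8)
The plan is to translate each of the three dichotomies about membership in $\P_i(E)$ into a statement purely about the Frobenius conjugacy class in $G(M/K) = G(K(E[p])/K)$, using the criteria already set up in Propositions \ref{ramlocal} and \ref{localcond} and the definition of $\P_i$. Fix $\q \notin \Sigma$; then $E$ has good reduction at $\q$ and $\q \nmid p\infty$, so Proposition \ref{localcond} applies with the trivial (unramified) character: $\dim_p \Hu(K_\q, E[p]) = \dim_p E[p]^{\Frob_\q = 1}$. The key point is that the action of $G_{K_\q}^{\ur}$ on $E[p]$ factors through the residue-field Frobenius, whose action on $E[p]$ is exactly the action of (a choice of) $\Frob_\q \in G(M/K)$, since $M/K$ is unramified at $\q$. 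So all three quantities entering the definitions — whether $\mu_p \subset K_\q$, and the value of $\dim_p \Hu(K_\q, E[p])$ — are read off from the $2\times 2$ matrix (or rather the element of $\GL_2(\F_p)$) by which $\Frob_\q$ acts on $E[p]$.

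First I would record the linear-algebra input. For $g \in \GL_2(\F_p)$ acting on $V = E[p] \cong \F_p^2$, we have $\dim_p V^{g=1} = 2 - \rank(g - 1)$, so $\dim_p V^{g=1} \in \{0,1,2\}$, and it equals $2$ iff $g = 1$, equals $1$ iff $g \ne 1$ but $(g-1)^2 = 0$ (equivalently $g$ is unipotent $\ne 1$, equivalently $g$ has order exactly $p$ since $p$ is the characteristic), and equals $0$ iff $g - 1$ is invertible. Note also that $g$ is unipotent $\ne 1$ iff $g^p = 1$ but $g \ne 1$, and $g-1$ invertible iff $g^p \ne 1$ is \emph{not} quite right in general — I must be a little careful. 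The clean statement: in $\GL_2(\F_p)$, an element $g$ satisfies $g^p = 1$ iff $g = 1$ or $g$ is a nontrivial unipotent; so $g^p \ne 1 \iff g \ne 1$ and $g$ not unipotent $\iff \dim_p V^{g=1} = 0$. Combining, $\dim_p V^{g=1} = 2 \iff g = 1$, $\dim_p V^{g=1} = 1 \iff g^p = 1$ and $g \ne 1 \iff \ord(g) = p$, and $\dim_p V^{g=1} = 0 \iff g^p \ne 1$. This is a finite check on conjugacy classes in $\GL_2(\F_p)$ and does not depend on the choice of $\Frob_\q$ within its conjugacy class.

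Next I would connect this to the condition $\mu_p \subset K_\q$ appearing in the definitions of $\P_1, \P_2$. Since $K(\mu_p) \subset M$ (the Weil pairing gives $\mu_p \subset E[p]$ as a sub-Galois-module, so the cyclotomic character is a quotient of the mod-$p$ representation, hence $\det \circ \rho_{E,p}$ cuts out $K(\mu_p)$), the place $\q$ splits completely in $K(\mu_p)$, i.e. $\mu_p \subset K_\q$, iff $\det \Frob_\q = 1$ in $\F_p^\times$, i.e. $\Frob_\q \in \SL_2(\F_p)$. Now I observe the compatibility of the two conditions: if $\dim_p V^{\Frob_\q = 1} \ge 1$ then $\Frob_\q$ has eigenvalue $1$, so its characteristic polynomial is $(X-1)(X-\lambda)$ with $\det = \lambda$; but if moreover $\Frob_\q^p = 1$ then $\lambda^p = 1$ in $\bar\F_p$, forcing $\lambda = 1$, hence $\det\Frob_\q = 1$ and $\mu_p \subset K_\q$ automatically. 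Thus in cases (i) and (ii) the cyclotomic condition in the definition of $\P_2$, resp. $\P_1$, is redundant, and the lemma follows: (i) $\q \in \P_2(E)$ iff ($\mu_p \subset K_\q$ and $\dim_p \Hu = 2$) iff $\dim_p V^{\Frob_\q=1} = 2$ iff $\Frob_\q = 1$; (ii) $\q \in \P_1(E)$ iff $\dim_p V^{\Frob_\q = 1} = 1$ iff $\ord(\Frob_\q) = p$; (iii) $\q \in \P_0(E)$ iff $\q \notin \Sigma \cup \P_1 \cup \P_2$ iff $\Frob_\q \ne 1$ and $\ord(\Frob_\q) \ne p$ iff $\Frob_\q^p \ne 1$, using the $\GL_2$-computation above.

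The main obstacle — really the only subtle point — is precisely this interplay between the "$\mu_p \subset K_\q$" clause built into $\P_1, \P_2$ and the dimension of $\Hu$: one has to verify that eigenvalue $1$ together with order dividing $p$ forces determinant $1$, so that the explicit Frobenius criterion in the lemma does not lose information relative to the definition. Everything else is the standard good-reduction dictionary (unramified cohomology $\leftrightarrow$ Frobenius coinvariants, Proposition \ref{localcond}) plus a short computation of orders of elements of $\GL_2(\F_p)$, which I would state as a sublemma and dispatch by the characteristic-polynomial argument above rather than by case analysis.
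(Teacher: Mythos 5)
Your overall strategy is the standard one (the paper itself simply cites \cite[Lemma 4.3]{kmr1} rather than giving an argument): use Proposition \ref{localcond} to replace $\dim_p\Hu(K_\q,E[p])$ by $\dim_p E[p]^{\Frob_\q=1}$, use the Weil pairing to replace ``$\mu_p\subset K_\q$'' by ``$\det\Frob_\q=1$'', and finish with linear algebra in $\GL_2(\F_p)$. However, your linear-algebra ``clean statement'' is false, and this leaves a genuine gap in part (ii) (and hence in one direction of (iii)). For $g\in\GL_2(\F_p)$ acting on $V\cong\F_p^2$ it is \emph{not} true that $\dim_p V^{g=1}=1$ iff $\ord(g)=p$, nor that $\dim_p V^{g=1}=0$ iff $g^p\ne 1$: take $g$ conjugate to $\mathrm{diag}(1,\lambda)$ with $\lambda\ne 1$; then $\dim_p V^{g=1}=1$, yet $g$ is not unipotent and $g^p\ne 1$. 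Consequently your claim that the clause ``$\mu_p\subset K_\q$'' is redundant in the definition of $\P_1$ is wrong: your redundancy argument only shows that $\dim_p V^{\Frob_\q=1}\ge 1$ \emph{together with} $\Frob_\q^p=1$ forces $\det\Frob_\q=1$, i.e.\ it proves ``$\ord(\Frob_\q)=p\Rightarrow\q\in\P_1$''; it does not prove ``$\q\in\P_1\Rightarrow\ord(\Frob_\q)=p$'', which is exactly where your chain ``$\q\in\P_1$ iff $\dim_p V^{\Frob_\q=1}=1$ iff $\ord(\Frob_\q)=p$'' breaks. This is not a vacuous worry in the present paper: in Case 1 one has $E(K)[p]\ne0$, possibly with $K\ne K(\mu_p)$, and then every prime $\q\notin\Sigma$ that does not split completely in $K(\mu_p)$ has Frobenius conjugate to $\mathrm{diag}(1,\lambda)$ with $\lambda\ne1$; such $\q$ satisfy $\dim_p\Hu(K_\q,E[p])=1$ and yet lie in $\P_0$, with $\Frob_\q^p\ne1$.

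The repair is short and uses an argument you already wrote down, just pointed in the other direction: if $\q\in\P_1$, then $\mu_p\subset K_\q$ gives $\det\Frob_\q=1$ (the determinant of the mod-$p$ representation is the cyclotomic character), while $\dim_p E[p]^{\Frob_\q=1}=1$ gives characteristic polynomial $(X-1)(X-\lambda)$ with $\lambda=\det\Frob_\q=1$ and $\Frob_\q\ne 1$; hence $\Frob_\q$ is a nontrivial unipotent and has order exactly $p$. With this corrected, (i) stands as you wrote it, (ii) holds in both directions, and (iii) follows from (i) and (ii) together with the (correct) fact that $g^p=1$ in $\GL_2(\F_p)$ iff $g=1$ or $g$ is a nontrivial unipotent; the deduction of (iii) should rest on that fact, not on the false equivalence ``$\dim_p V^{g=1}=0\iff g^p\ne1$''.
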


\begin{proof}
    See \cite[Lemma 4.3]{kmr1}
\end{proof}

\begin{rem} \label{zerorem}
   By the Chebotarev density theorem, $\P_2(E)$ has positive density. If $\q \in \P_0(E)$ and $\psi_\q\in\mathcal{C}(K_\q)$, $$\dim_p\Hu(K_\q,E[p])=0=\dim_pH^1(K_\q,E[p]).$$ Thus, $\gamma_\q(1_\q)=\gamma_\q(\psi_\q)=0$ and $\Sel_p(E/K,\psi_\q)=\Sel_p(E/K)$.
\end{rem}

The following two lemmas are used to construct global characters from local characters.

\begin{lem} \cite[Lemma 6.6]{kmr1}
\label{elem}
Suppose $G$ and $H$ are abelian groups, and $J \subset G \times H$ is a subgroup. Let $\pi_G$ and $\pi_H$ denote the projection maps from $G \times H$ to $G$ and $H$, respectively.
Let $J_0 := \ker(J \map{\pi_G} G/G^p)$.
\begin{enumerate}
\item The image of the natural map $\Hom((G \times H)/J,\bmu_p) \to \Hom(H,\bmu_p)$ is 
$\Hom(H/\pi_H(J_0),\bmu_p)$.
\item If $J/J^p \to G/G^p$ is injective, 
then $\Hom((G \times H)/J,\bmu_p) \to \Hom(H,\bmu_p)$ 
is surjective.
\end{enumerate}

\begin{proof}
    Consider the following sequence of $\F_p$-vector spaces is exact : $$0 \too \pi_H(J_0)H^p/H^p \too H/H^p \too (G\times H)/J(G\times H)^p.$$ Applying $\Hom(\cdot,\mu_p)$ completes the proof.
\end{proof}

\end{lem}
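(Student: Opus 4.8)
The plan is to reduce the entire statement to linear algebra over $\F_p$. Since $\mu_p$ is killed by $p$, for any abelian group $A$ there is a canonical identification $\Hom(A,\mu_p)=\Hom(A/A^p,\mu_p)$ with $A/A^p$ an $\F_p$-vector space, and since $\mu_p\cong\F_p$ as $\F_p$-modules, $\Hom(-,\mu_p)$ is an exact contravariant functor on $\F_p$-vector spaces; in particular, for an $\F_p$-linear map $f\colon V\to W$ the image of $f^{\ast}\colon\Hom(W,\mu_p)\to\Hom(V,\mu_p)$ is $\Hom(V/\ker f,\mu_p)$ (a functional vanishing on $\ker f$ factors through $V/\ker f\cong f(V)$, and extends to $W$ because $f(V)$ is a direct summand of $W$). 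Using $(G\times H)^p=G^p\times H^p$, the natural map of the lemma becomes exactly $f^{\ast}$ for $f\colon H/H^p\to (G\times H)/J(G\times H)^p$ induced by $h\mapsto(0,h)$. So (i) follows once $\ker f$ is identified, and (ii) will be a consequence of (i).

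Thus the one substantive step is to verify the exact sequence
\[
0\too \pi_H(J_0)H^p/H^p\too H/H^p\too (G\times H)/J(G\times H)^p ,
\]
i.e. that the preimage in $H$ of $\ker f$ is $\pi_H(J_0)H^p$. This is a short diagram chase: $(0,h)\in J(G^p\times H^p)$ means $(0,h)=(j_1,j_2)+(a,b)$ with $(j_1,j_2)\in J$, $a\in G^p$, $b\in H^p$; the first coordinate forces $j_1=-a\in G^p$, hence $(j_1,j_2)\in J_0$ by the definition of $J_0$, and then $h=j_2+b\in\pi_H(J_0)H^p$. The reverse inclusion follows by reading the same identity backwards. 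Applying $\Hom(-,\mu_p)$ and using its exactness, the image of the natural map is $\Hom\!\big(H/\pi_H(J_0)H^p,\mu_p\big)=\Hom\!\big(H/\pi_H(J_0),\mu_p\big)$, the last equality because homomorphisms to $\mu_p$ kill $p$-th powers; this is (i).

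For (ii), note that $J\to G/G^p$ factors through $J\to J/J^p$, so $J_0=\ker(J\to G/G^p)$ is the preimage in $J$ of $\ker(J/J^p\to G/G^p)$. If $J/J^p\to G/G^p$ is injective this kernel is trivial, so $J_0=\ker(J\to J/J^p)=J^p$; hence $\pi_H(J_0)=\pi_H(J)^p\subseteq H^p$, and by (i) the image of $\Hom((G\times H)/J,\mu_p)\to\Hom(H,\mu_p)$ is all of $\Hom(H/H^p,\mu_p)=\Hom(H,\mu_p)$, i.e. the map is surjective. The only real content is the kernel computation in the second paragraph; everything else is a formal consequence of the exactness of $\Hom(-,\mu_p)$ on $\F_p$-vector spaces, so no genuine obstacle is expected.
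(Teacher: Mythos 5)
Your proof is correct and follows essentially the same route as the paper: the paper's entire argument is the exact sequence $0 \to \pi_H(J_0)H^p/H^p \to H/H^p \to (G\times H)/J(G\times H)^p$ followed by applying $\Hom(\cdot,\mu_p)$, and your diagram chase simply supplies the verification of that sequence together with the deduction of (i) and (ii), including the correct observation that injectivity of $J/J^p \to G/G^p$ gives $\pi_H(J_0)\subseteq H^p$.
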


\begin{lem} \label{cftlemma}
    Let $\Sigma$ be a (finite) set of places of $K$ such that $\text{Pic}(\O_{K,\Sigma}^\times) = 0$. Then the image of the restriction map \begin{align*}
        \cC(K)=\Hom(G_K,\mu_p)=\Hom((\prod_{v\in \Sigma}&K_v^\times \times \prod_{v \notin \Sigma} \O_v^\times)/\O_{K,\Sigma}^\times, \mu_p) \too \\ &\prod_{v \in \Sigma} \Hom(K_v^\times, \mu_p) \times \prod_{v \notin \Sigma} \Hom(\O_v^\times, \mu_p)
    \end{align*}
     is the set of all $((\psi_v)_{v})$ such that $\prod_{v}\psi_v(b) = 1$ for all $b\in\O_{K,\Sigma}^\times$.
\end{lem}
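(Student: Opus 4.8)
The plan is to recognize the asserted equality $\cC(K)=\Hom\bigl(A/\O_{K,\Sigma}^\times,\mu_p\bigr)$, where I abbreviate $A:=\prod_{v\in\Sigma}K_v^\times\times\prod_{v\notin\Sigma}\O_v^\times$ and where $\O_{K,\Sigma}^\times$ is embedded in $A$ diagonally (legitimate, since a $\Sigma$-unit lies in $\O_v^\times$ at every $v\notin\Sigma$), and then to read off the image of the restriction map from the universal property of this quotient. First, for orientation, I would recall where the middle identification comes from: global class field theory gives $\cC(K)=\Hom_{\mathrm{cont}}(G_K,\mu_p)=\Hom_{\mathrm{cont}}(\mathbb{A}_K^\times/K^\times,\mu_p)$, and the hypothesis $\operatorname{Pic}(\O_{K,\Sigma})=0$ yields both $\mathbb{A}_K^\times=K^\times\cdot A$ (given an idele, the fractional $\O_{K,\Sigma}$-ideal it determines away from $\Sigma$ is principal, and dividing the idele by a generator lands it in $A$) and $K^\times\cap A=\O_{K,\Sigma}^\times$, so that $\mathbb{A}_K^\times/K^\times\cong A/\O_{K,\Sigma}^\times$.

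Next I would pin down the target. Each $\O_v^\times$ with $v\notin\Sigma$ is compact and $\mu_p$ is discrete, so a continuous homomorphism out of $A$ has open kernel and is therefore trivial on $\O_v^\times$ for all but finitely many $v$; hence a continuous character of $A$ is exactly a tuple $(\psi_v)_v$ with $\psi_v\in\Hom(K_v^\times,\mu_p)$ for $v\in\Sigma$, $\psi_v\in\Hom(\O_v^\times,\mu_p)$ for $v\notin\Sigma$, and $\psi_v$ trivial for all but finitely many $v$. (This restricted product is what the displayed codomain in the lemma tacitly denotes.) Under this identification the map in the lemma becomes the injection $\Hom(A/\O_{K,\Sigma}^\times,\mu_p)\hookrightarrow\Hom_{\mathrm{cont}}(A,\mu_p)$ given by pullback along $A\twoheadrightarrow A/\O_{K,\Sigma}^\times$.

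The last step is then immediate: by the universal property of the quotient, the image of pullback along $A\twoheadrightarrow A/B$ is exactly the set of homomorphisms of $A$ that vanish on $B$. With $B=\O_{K,\Sigma}^\times$ embedded diagonally, a tuple $\psi=(\psi_v)_v$ vanishes on the image of $b\in\O_{K,\Sigma}^\times$ if and only if $\prod_v\psi_v(b)=1$ (a finite product, since $\psi_v(b)=1$ whenever $\psi_v$ is trivial on $\O_v^\times\ni b$, which holds for almost all $v\notin\Sigma$). Hence the image of the restriction map is $\{(\psi_v)_v:\prod_v\psi_v(b)=1\text{ for all }b\in\O_{K,\Sigma}^\times\}$, as asserted.

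I do not expect a genuine obstacle here; the only points demanding care are topological bookkeeping — matching \emph{continuous} characters of $G_K$ with characters of the quotient $A/\O_{K,\Sigma}^\times$ (equivalently, recognizing the product over $v\notin\Sigma$ as a restricted product), and checking that $\operatorname{Pic}(\O_{K,\Sigma})=0$ really does force $\mathbb{A}_K^\times=K^\times\cdot A$ together with $K^\times\cap A=\O_{K,\Sigma}^\times$. Both are standard, so the proof reduces to the elementary quotient-image statement above.
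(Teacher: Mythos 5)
Your proposal is correct and follows essentially the same route as the paper, which simply invokes global class field theory together with $\operatorname{Pic}(\O_{K,\Sigma})=0$ to get the identification $\cC(K)=\Hom\bigl((\prod_{v\in\Sigma}K_v^\times\times\prod_{v\notin\Sigma}\O_v^\times)/\O_{K,\Sigma}^\times,\mu_p\bigr)$ and then reads off the image; you have merely supplied the details (the decomposition $\mathbb{A}_K^\times=K^\times\cdot A$ with $K^\times\cap A=\O_{K,\Sigma}^\times$, the restricted-product description of continuous characters of $A$, and the universal property of the quotient) that the paper leaves implicit.
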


\begin{proof}
Global class field theory, together with the assumption \( \mathrm{Pic}(\mathcal{O}_{K,\Sigma}) = 1 \), yields the equalities. The image follows as claimed.
\end{proof}

\begin{lem}\cite[Lemma 5.7]{kmr1}
\label{4.6}
Suppose $p > 2$, $\q\in\P_2$, and $\psi\in\cC(K_\q)$ is nontrivial. If $F$ is the cyclic extension of $K_\q$ corresponding to $\psi$, then
$$
\gamma_v(\psi) = \Hom(G(F/K_\q),E[p]) \subset \Hom(G_{K_\q},E[p]) = H^1(K_\q,E[p]).
$$
\end{lem}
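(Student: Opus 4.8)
The plan is to place both $\gamma_\q(\psi)$ and $\Hom(G(F/K_\q),E[p])$ inside $H^1(K_\q,E[p])=\Hom(G_{K_\q},E[p])$ — recall that $\q\in\P_2$ forces $\Frob_\q=1$ by Lemma~\ref{plem}, so $E[p]\subseteq E(K_\q)$, hence $\mu_p\subseteq K_\q$ and $E[p]$ is a trivial $G_{K_\q}$-module — and then to show they coincide by a dimension count together with one inclusion. Write $G_\q:=G(F/K_\q)\cong\Z/p$, let $\rho\colon H^1(K_\q,E[p])\to H^1(F,E[p])$ be restriction, and note that by inflation--restriction $\Hom(G_\q,E[p])=\ker\rho$, which has $\F_p$-dimension $\dim_p\Hom(\Z/p,(\Z/p)^2)=2$. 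So it suffices to prove (i) $\dim_p\gamma_\q(\psi)=2$, and (ii) $\rho(\gamma_\q(\psi))=0$. The case of unramified $\psi$ is immediate: then $F$ is the unramified degree-$p$ extension, $\gamma_\q(\psi)=\Hu(K_\q,E[p])$ by Proposition~\ref{localcond}, and since $E$ has good reduction with residue characteristic $\ne p$ (so $E[p]$ is unramified) while $\Frob_\q=1$, one gets $\Hu(K_\q,E[p])=\Hom(G(K_\q^\ur/K_\q),E[p])=\Hom(G_\q,E[p])$. So I may assume from now on that $\psi$ is ramified, in which case $F/K_\q$ is totally (and tamely) ramified of degree $p$.

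For (i): the Kummer injection $E^\psi(K_\q)/\p E^\psi(K_\q)\hookrightarrow H^1(K_\q,E^\psi[\p])$ identifies $\gamma_\q(\psi)$ with $E^\psi(K_\q)/\p E^\psi(K_\q)$. Writing $\ell\ne p$ for the residue characteristic, $E^\psi(K_\q)$ is a compact $\ell$-adic group carrying the $\O$-action, and a generator $\lambda$ of $\p$ is an isogeny of $E^\psi$ of degree prime to $\ell$, hence acts bijectively on the pro-$\ell$ open subgroup $\widehat{E^\psi}(\m_{K_\q})$; therefore $\ker(\lambda\mid E^\psi(K_\q))$ and $\coker(\lambda\mid E^\psi(K_\q))$ are finite of equal order, so $\dim_p\gamma_\q(\psi)=\dim_{\F_p}E^\psi[\p](K_\q)$. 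Since $E^\psi[\p]\cong E[p]$ as $G_{K_\q}$-modules with trivial action, this equals $\dim_{\F_p}E[p]=2$.

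For (ii), the heart of the matter, it suffices to show $E^\psi(K_\q)\subseteq\p E^\psi(F)$: then the Kummer class of every point of $E^\psi(K_\q)$ restricts to $0$ in $H^1(F,E^\psi[\p])=H^1(F,E[p])$ by functoriality of the connecting map. Unwinding the definition of $E^\psi$ as the kernel of the norm map $\Res^F_{K_\q}(E)\to E$, I identify $E^\psi(K_\q)$ with $\ker(\tr_{F/K_\q}\colon E(F)\to E(K_\q))$ and $E^\psi(F)$ with $\{(R_g)_{g\in G_\q}: R_g\in E(F),\ \sum_g R_g=0\}$, on which $\lambda=1-\zeta_p$ acts through the cyclic permutation of the coordinates by $\zeta_p$; fixing a generator $\sigma$ of $G_\q$, a point $Q\in E(F)$ with $\tr_{F/K_\q}Q=0$ corresponds to $(\sigma^iQ)_i$. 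Solving $\lambda R=(\sigma^iQ)_i$ in $E^\psi(F)$ is then a telescoping computation which, using $\tr_{F/K_\q}Q=0$, reduces to finding $R_0\in E(F)$ with $pR_0=pQ+\bigl(\sum_{k=0}^{p-1}k\sigma^k\bigr)Q$. The key point is that $F/K_\q$ is totally ramified, so $G_\q$ acts trivially on $E(F)/pE(F)\cong\widetilde{E}(\kappa)/p\widetilde{E}(\kappa)$ (same residue field $\kappa$, good reduction, $\ell\ne p$); hence $\sum_{k=0}^{p-1}k\sigma^k$ reduces to $\tfrac{p(p-1)}{2}\equiv 0\pmod p$ on $E(F)/pE(F)$, the right-hand side lies in $pE(F)$, and the desired $R_0$ exists. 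Combining (i) and (ii) gives $\gamma_\q(\psi)=\ker\rho=\Hom(G_\q,E[p])$.

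I expect step (ii) to be the main obstacle: it forces one to commit to an explicit model of the twist $E^\psi$ and to carry the $\O$-action through the Weil restriction; what rescues the computation is precisely that ramification of $\psi$ makes $F/K_\q$ totally ramified, so that $G(F/K_\q)$ acts trivially on $E(F)/pE(F)$ and the operator $\sum_k k\sigma^k$ is divisible by $p$ there (this is also where the hypothesis $p>2$ is used).
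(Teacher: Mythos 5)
Your argument is correct, and it is worth noting that the paper itself offers no proof of this lemma: it simply cites \cite[Lemma~5.7]{kmr1}, so what you have written is a genuinely self-contained alternative. The cited proof (whose key intermediate facts the paper records in Remark~\ref{canlem}) runs through the torsion computation $E^\psi(K_\q)[p^\infty]=E^\psi[\p]$: since $\ell\neq p$, the prime-to-$p$ part of $E^\psi(K_\q)$ is $\p$-divisible, so $\gamma_\q(\psi)\cong E^\psi[\p]$ is generated by Kummer classes of $\p$-torsion points, whose cocycles factor through $G(F/K_\q)$ because $E^\psi[p]$ is rational over $F$; equality then follows from $\dim_p\Hom(G(F/K_\q),E[p])=2$. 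You reach the same two inputs by a slightly different route: the dimension count via bijectivity of $\lambda$ on the formal-group points (ker/coker of $\lambda$ on $E^\psi(K_\q)$ have equal order, so $\dim_p\gamma_\q(\psi)=\dim_p E^\psi[\p]=2$), and the containment in $\ker\bigl(H^1(K_\q,E[p])\to H^1(F,E[p])\bigr)$ via the explicit Weil-restriction coordinates and the divisibility $E^\psi(K_\q)\subseteq\p E^\psi(F)$, which uses exactly the right facts: total ramification of $F/K_\q$ (so $G(F/K_\q)$ acts trivially on $E(F)/pE(F)\cong\widetilde E(\kappa)/p\widetilde E(\kappa)$) and $p>2$ (so $\sum_k k\equiv 0\bmod p$). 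Your handling of the unramified case through Proposition~\ref{localcond} is also needed, since the lemma only assumes $\psi$ nontrivial. The one cosmetic point: the precise telescoping identity ($pR_0=pQ+\bigl(\sum_k k\sigma^k\bigr)Q$ versus $pR_0=-\bigl(\sum_k k\sigma^k\bigr)Q$) depends on the chosen shift convention and on whether $\zeta$ acts as $\sigma$ or $\sigma^{-1}$, but this is harmless because $(1-\zeta)$ and $(1-\zeta^{-1})$ generate the same ideal $\p$, so the divisibility you actually need is unaffected.
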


\begin{proof}
    See \cite[Lemma 5.7]{kmr1}.
\end{proof}
\begin{rem} \label{canlem}
    In the proof of Lemma \ref{4.6}, the authors proved \begin{equation} \label{disp}
        E^\psi(K_\q)[p^\infty]=E^\psi[\p]\quad \text{and}\quad\dim_p(\Hom(G(F/K_\q),E[p]))=2.
    \end{equation} These results play a crucial role in increasing the Selmer rank.
\end{rem}

\subsection{Case1. $E(K)[p]\ne 0$}

\begin{thm}\label{firstthm}
   Suppose that $E(K)[p]\ne 0$. Then, for every positive integer $n$, there exist infinitely many $\chi \in \cC(K)$ satisfying $r_p(E,\chi)=r_p(E)+2n$.
\end{thm}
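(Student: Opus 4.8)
The plan is to induct on $n$; the base case $n=1$ contains the main idea, and the inductive step repeats it with $E$ replaced by the twisted object, so the key is to set things up so that the hypothesis $E(K)[p]\neq 0$ survives — which it does, since by the canonical $G_K$-isomorphism $E^\chi[\p]\cong E[p]$ we have $0\neq E(K)[p]=(E^\chi[\p])^{G_K}$, and in fact Remark~\ref{canlem} will give us the stronger local statement $E^\psi(K_\q)[p^\infty]=E^\psi[\p]$ at the auxiliary prime. So I focus on producing, given $E/K$ with $r_p(E)=r$ and $E(K)[p]\neq 0$, infinitely many $\chi\in\cC(K)$ with $r_p(E,\chi)=r+2$.

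First I would choose the auxiliary prime $\q$. Using the Chebotarev density theorem applied to $N_E/K$ (Definition~\ref{defnN}), I select $\q\notin\Sigma$ with $\Frob_\q=1$ in $G(N_E/K)$; by Lemma~\ref{plem} this forces $\q\in\P_2(E)$, and because $\Frob_\q$ fixes $L_E$ and $K(\sqrt[p]{\O_{K,\Sigma}^\times})$ it also makes $\loc_\q(s)=0$ for every $s\in\Sel_p(E/K)$ (each $\bar s$ is killed, and unramified classes are detected by evaluation at $\Frob_q$ via Proposition~\ref{localcond}) and makes every unit in $\O_{K,\Sigma}^\times$ a $p$-th power locally at $\q$. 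Thus $\loc_\q(\Sel_p(E/K))=0$, equivalently $\Sel_p(E/K)_\q=\Sel_p(E/K)$, and by Theorem~\ref{dual} together with Remark~\ref{dimrem} we get $\dim_p\Sel_p(E/K)^\q=r+2$ with $\loc_\q$ surjecting $\Sel_p(E/K)^\q$ onto $H^1(K_\q,E[p])/\gamma_\q(1_\q)$, a space of dimension $2$.

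Next I would build the global character $\chi$ ramified exactly at $\q$ with a prescribed ramified local component $\psi\in\cC(K_\q)$, and trivial elsewhere. Concretely, pick any nontrivial $\psi\in\cC(K_\q)=\Hom(K_\q^\times,\mu_p)$; since units of $\O_{K,\Sigma}^\times$ are $p$-th powers in $K_\q^\times$ we have $\prod_v\psi_v(b)=1$ for all $b\in\O_{K,\Sigma}^\times$ (all factors are $1$: the $\q$-component because $\psi$ kills $p$-th powers, the others because we take them trivial), so Lemma~\ref{cftlemma} (or Lemma~\ref{elem}(ii)) produces $\chi\in\cC(K)$ with $\chi_\q=\psi$ and $\chi_v=1_v$ for all $v\neq\q$. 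Then $\Sel_p(E/K,\chi)$ and $\Sel_p(E/K)$ have identical local conditions away from $\q$, so $\Sel_p(E/K)_\q\subseteq\Sel_p(E/K,\chi)\subseteq\Sel_p(E/K)^\q$, and $\Sel_p(E/K,\chi)=\loc_\q^{-1}\bigl(\gamma_\q(\psi)\bigr)\cap\Sel_p(E/K)^\q$. The dimension count then reads
\[
r_p(E,\chi)=\dim_p\Sel_p(E/K)_\q+\dim_p\bigl(\loc_\q(\Sel_p(E/K)^\q)\cap\gamma_\q(\psi)\bigr)=r+\dim_p\bigl(\tfrac{H^1(K_\q,E[p])}{\gamma_\q(1_\q)}\cap\text{(image of }\gamma_\q(\psi)\text{)}\bigr),
\]
so everything comes down to showing that the image of $\gamma_\q(\psi)$ in $H^1(K_\q,E[p])/\gamma_\q(1_\q)$ is all of it, i.e. $\gamma_\q(\psi)+\gamma_\q(1_\q)=H^1(K_\q,E[p])$, a $4$-dimensional space.

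The main obstacle — and the place where $E(K)[p]\neq 0$ is genuinely used — is this last local claim. Here I would use $\q\in\P_2$ and the structure from Lemma~\ref{4.6} and Remark~\ref{canlem}: $\gamma_\q(1_\q)=\Hu(K_\q,E[p])$ has dimension $2$ by Proposition~\ref{localcond} (since $\Frob_\q=1$ on $E[p]$), and $\gamma_\q(\psi)=\Hom(G(F/K_\q),E[p])$ is $2$-dimensional by Remark~\ref{canlem}; by Proposition~\ref{ramlocal} they intersect trivially, so their sum is $4$-dimensional, which is all of $H^1(K_\q,E[p])$ because $E[p]^{G_{K_\q}}=E[p]$ (as $\q\in\P_2$) gives $\dim_p H^1(K_\q,E[p])=2\dim_p E[p]^{G_{K_\q}}/\cdots$ — more carefully, $\dim_p H^1(K_\q,E[p])=4$ by Remark~\ref{dimrem} since $\q\in\P_2$. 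Hence $\loc_\q\colon\Sel_p(E/K,\chi)\to\gamma_\q(\psi)$ is surjective with kernel $\Sel_p(E/K)_\q=\Sel_p(E/K)$, giving $r_p(E,\chi)=r+\dim_p\gamma_\q(\psi)=r+2$. Finally, Chebotarev gives infinitely many choices of $\q$, hence infinitely many $\chi$; feeding $E^\chi$ back into the argument and iterating yields the $+2n$ statement by induction. I expect the only subtlety beyond bookkeeping to be confirming that $\loc_\q(\Sel_p(E/K,\chi))$ is exactly $\gamma_\q(\psi)$ rather than a proper subspace — this is exactly where the torsion point, via $E^\psi(K_\q)[p^\infty]=E^\psi[\p]$, forces $\gamma_\q(\psi)$ to be as large as possible and to meet the unramified part only in $0$.
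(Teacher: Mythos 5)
Your setup (choice of $\q$ split in $N_E$, so $\q\in\P_2$, $\loc_\q(\Sel_p(E/K))=0$, construction of $\chi$ via Lemma~\ref{cftlemma}, and the sandwich $\Sel_p(E/K)_\q\subseteq\Sel_p(E/K,\chi)\subseteq\Sel_p(E/K)^\q$ with $\dim_p\Sel_p(E/K)^\q=r+2$) matches the paper, but the key step has a genuine gap. You reduce everything to the purely local statement $\gamma_\q(\chi_\q)+\gamma_\q(1_\q)=H^1(K_\q,E[p])$ and claim this forces $\loc_\q\colon\Sel_p(E/K,\chi)\to\gamma_\q(\chi_\q)$ to be surjective. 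That implication is false: writing $W:=\loc_\q(\Sel_p(E/K)^\q)\subset H^1(K_\q,E[p])$, global duality and $\loc_\q(\Sel_p(E/K))=0$ show that $W$ is a $2$-dimensional complement of $\gamma_\q(1_\q)$, and by Proposition~\ref{ramlocal} and Remark~\ref{canlem} so is $\gamma_\q(\chi_\q)$; but two complements of the same $2$-dimensional subspace of a $4$-dimensional space need not coincide, and $r_p(E,\chi)=r+\dim_p(W\cap\gamma_\q(\chi_\q))$ can a priori be $r$, $r+1$ or $r+2$. No local computation at $\q$ can decide this; you need a global input, and this is precisely where the hypothesis $E(K)[p]\ne 0$ enters in the paper, in a way different from what you describe. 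Your closing remark attributes the role of the torsion point to the local fact $E^{\psi}(K_\q)[p^\infty]=E^{\psi}[\p]$, but that fact holds for any $E$ and any ramified $\psi$ at $\q\in\P_2$; it only gives $\dim_p\gamma_\q(\chi_\q)=2$ and $\gamma_\q(\chi_\q)\cap\Hu=0$, which you already had.

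The paper's argument closes the gap in two steps you are missing. First, a parity theorem (\cite[Theorem 1.4]{alc}, cf.\ Theorem~\ref{mod2}) shows $r_p(E,\chi)-r_p(E)\equiv\dim_p\gamma_\q(1_\q)\equiv 0\pmod 2$, so the jump is $0$ or $2$. Second, the hypothesis $E(K)[p]\ne0$ is used \emph{globally}: since $E^{\chi}[\p]\cong E[p]$ as $G_K$-modules, there is a nonzero $P\in E^{\chi}(K)[\p]$, and its image under the Kummer map lies in $\Sel_p(E/K,\chi)$; because $E^{\chi}(K_\q)/\p E^{\chi}(K_\q)\cong E^{\chi}[\p]$ canonically (Remark~\ref{canlem}), the localization of this class at $\q$ is nonzero, whereas every class of $\Sel_p(E/K)$ dies at $\q$ by the choice of $\q$ (this is why $L_E$ is included in $N_E$). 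Hence $\Sel_p(E/K)\subsetneq\Sel_p(E/K,\chi)$, and parity forces $r_p(E,\chi)=r+2$. Two smaller points: Lemma~\ref{cftlemma} lets you prescribe triviality at the places of $\Sigma$ and ramification at $\q$, but only unramifiedness (not triviality) at the remaining places, so you must invoke Proposition~\ref{localcond} to see the local conditions are unchanged there; and the induction should not be run by feeding the abelian variety $E^\chi$ (which is not an elliptic curve over $K$) back into the statement, but, as in the paper, by fixing the previously constructed $\chi'$, enlarging $\Sigma$ and $N_E$ to depend on $\chi'$, and producing $\chi$ with $r_p(E,\chi'\chi)=r_p(E,\chi')+2$.
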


\begin{proof}
    Fix $\chi'\in\cC(K)$. We claim that there exist infinitely many $\chi''\in\cC(K)$ such that $$r_p(E,\chi'')=r_p(E,\chi')+2.$$ Then the result follows by induction.
    Fix $\chi'\in\cC(K)$. Let $\Sigma(\chi'):=\Sigma\cup\{\p\mid \cond(\chi')\}$ where $\Sigma$ is as in Section~2.
    Let $\bar{s}$ denote the image of $s \in \Sel_{p}(E/K,\chi')$ in the restriction map $$\Sel_{p}(E/K,\chi') \hookrightarrow H^1(K,E[p]) \too \Hom(G_{M},E[p]).$$
    Let $N'_E$ be the Galois closure of $L_E'K(\sqrt[p]{\O_{K,\Sigma(\chi')}^\times})$ over $K$ where $L_E'$ is the fixed field of $\cap_{s\in\Sel_p(E/K,\chi')}\ker(\bar{s})$. Choose a prime $\q\notin\Sigma(\chi')$ such that $\Frob_\q|_{N_E'}=1$. Then $\q\in\P_2(E)$ by Lemma \ref{plem}.
    Put $\psi \in \prod_{v \in \Sigma(\chi')} \Hom(K_v^\times, \mu_p) \times \prod_{v \notin \Sigma(\chi')} \Hom(\O_v^\times, \mu_p)$ so that 
    \begin{itemize}
        \item $\psi_v=1_v$ for $v\in \Sigma(\chi')$,
        \item $\psi_\q$ is not trivial, and
        \item $\psi_\p$ is trivial for $\p\notin\Sigma(\chi')\cup\{\q\}$.
    \end{itemize}
    Since $K(\sqrt[p]{\O_{K,\Sigma(\chi')}^\times}) \subset N_E'$ and $\Frob_\q|_{N_E'}=1$, we have $\psi_\q(\O_{K,\Sigma(\chi')}^\times)=1$. Hence, by Lemma \ref{cftlemma}, there exists $\chi \in \cC(K)$ such that \begin{itemize}
        \item $\chi_v=1_v$ for $v\in\Sigma(\chi')$,
        \item $\chi_\q$ is ramified,
        \item $\chi_\p$ is unramified for $\p\notin\Sigma(\chi')\cup\{\q\}$.
    \end{itemize}
     Observe that \begin{equation} \label{sbset}
         \Sel_p(E/K,\chi')=\Sel_p(E/K,\chi')_\q \subset \Sel_p(E/K,\chi'\chi) \subset \Sel_p(E/K,\chi')^\q.
     \end{equation}
    Consider the following two exact sequences : 
    \begin{equation}
\raisebox{19pt}{
\xymatrix@C=12pt@R=7pt{
0 \ar[r] & \Sel_p(E/K,\chi') \ar[r] & \Sel_p(E/K,\chi')^{\q} \ar^-{\loc_\q}[rr] 
   && \displaystyle\frac{H^1(K_\q,E[p])}{\gamma_\q(\chi'_\q)} \\
0 \ar[r] & \Sel_p(E/K,\chi')_{\q} \ar[r] & \Sel_p(E/K,\chi') \ar^-{\loc_\q}[rr] 
   && \gamma_\q(\chi'_\q).
}}
\end{equation}
By Theorem \ref{dual} and Remark \ref{dimrem}, since $\q\in\P_2(E),$$$\dim_p(\Sel_p(E/K,\chi')^{\q}) - \dim_p(\Sel_p(E/K,\chi')_{\q}) = \displaystyle\frac{1}{2}\dim_p(H^1(K_\q,E[p]))=2.$$
On the other hand, if $v\ne\q$, then $\gamma_v(\chi_v)=\gamma_v(\chi_v'\chi_v)$ by Proposition \ref{localcond}. Note that $\chi'_\q$ is unramified. Hence, by and Lemma \ref{4.6} and \cite[Theorem 1.4]{alc}, 
\begin{align*}
    r_p(E,\chi'\chi)-r_p(E,\chi') &\equiv \sum \dim_p(\gamma_v(\chi_v')/\gamma_v(\chi_v'\chi_v)\cap\gamma_v(\chi'_v)) &\pmod{2} \\
    &\equiv \dim_p(\gamma_\q(\chi_\q')/\gamma_\q(\chi'_\q\chi_\q)\cap\gamma(\chi_\q')) &\pmod{2}\\ 
    &\equiv \dim_p(\gamma_\q(\chi'_\q)) \equiv 0. &\pmod{2}
\end{align*} By our construction of $\q$ and by \eqref{sbset}, $\Sel_p(E,\chi')_\q=\Sel_p(E,\chi')$ and $0 \le r_p(E,\chi'\chi)-r_p(E,\chi')\le 2$. Hence either $$ r_p(E,\chi'\chi)=r_p(E,\chi')\quad \text{or}\quad r_p(E,\chi'\chi)=r_p(E,\chi')+2.$$
Let $\chi'':=\chi'\chi$. 
Let $f$ be the composition
$$f : E^{\chi''}(K) \too E^{\chi''}(K)/\p E^{\chi''}(K) \too H^1(K,E^{\chi''}[\p]).$$
Since $E[p]\cong E^{\chi''}[\p]$ as $G_K$-modules, there exists $P\in E^{\chi''}(K)[\p]$ such that $P\ne 0$.
Observe that the following diagram commutes : \begin{equation*}
    \begin{tikzcd}[column sep=small]
        E^{\chi''}(K)[\p] \arrow[r,"f"]\arrow[d, hook] & \Sel_p(E/K,{\chi''}) 
        \arrow[r, phantom, "="] & \Sel_p(E/K,{\chi''}) \arrow[r, hook] & H^1(K,E[p]) \arrow[d, "\loc_\q"] \\
        E^{\chi''}[\p] \arrow[r,"\sim"] & E^{\chi''}(K_\q)/\p E^{\chi''}(K_\q) 
        \arrow[r, hook] & H^1(K_\q,E^{\chi''}[\p]) \arrow[r,"\sim"] & H^1(K_\q,E[p])
    \end{tikzcd}
\end{equation*}
    Note that $E^{\chi''}[\p]\cong E^{\chi''}(K_\q)/\p E^{\chi''}(K_\q)$ canonically by Remark \ref{canlem}. By diagram chasing, $\loc_\q(f(P))\ne 0$ and thus $\Sel_p(E/K,\chi') \subsetneq \Sel_p(E,\chi'')$. Hence $r_p(E,\chi'')=r_p(E,\chi')+2$.

\end{proof}

\subsection{Case2. $[M:K(\mu_p)]\nmid p$}
Recall that $M=K(E[p])$.
The following definition and lemma are provided to make use of Lemma \ref{elem} and Lemma \ref{cftlemma}.

\begin{defn}
    Define \begin{align*}
        \A_1:=& \ker(K^\times/{(K^\times)^p} \rightarrow M^\times/{(M^\times)^p}),\\
\A_2:=&\ker(\O_{K,\Sigma}^\times/{(\O_{K,\Sigma}^\times)^p} \rightarrow \prod_{\q\in\P_0} \O_\q^\times/{(\O_\q^\times)^p}).
    \end{align*}
        
\end{defn}

\begin{rem}
    Since there is a natural injection $\O_{K,\Sigma}^\times/{(\O_{K,\Sigma}^\times)^p} \rightarrow  K^\times/{(K^\times)^p}$, we identify $\O_{K,\Sigma}^\times/{(\O_{K,\Sigma}^\times)^p}$ with its image in $K^\times/{(K^\times)^p}$.
By \cite[Lemma 6.2]{kmr1}, $\A_1$ is generated by an element $\Delta\in\O_{K,\Sigma}^\times$.
\end{rem}

\begin{lem} \label{Alem1}
    Assume that $[M:K(\mu_p)]\nmid p$. Then $\A_2 \subset \A_1$.
\end{lem}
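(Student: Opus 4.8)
The goal is to show $\A_2 \subset \A_1$ under the hypothesis $[M:K(\mu_p)] \nmid p$. Since $\A_1$ is generated by a single element $\Delta \in \O_{K,\Sigma}^\times$ (as noted after the definition of $\A_1,\A_2$, via \cite[Lemma 6.2]{kmr1}), it suffices to show that every element of $\A_2$ lies in the subgroup of $K^\times/(K^\times)^p$ generated by the class of $\Delta$; equivalently, that the class of any $u \in \O_{K,\Sigma}^\times$ representing an element of $\A_2$ becomes trivial in $M^\times/(M^\times)^p$. The plan is to argue by a Galois-theoretic / Kummer-theoretic comparison: an element $u \in \O_{K,\Sigma}^\times$ lies in $\A_2$ precisely when $K(\sqrt[p]{u})$ is unramified at every $\q \in \P_0$ (up to the $p$-power classes), and we want to leverage the size hypothesis on $[M:K(\mu_p)]$ to force such a $u$ into the one-dimensional space $\A_1$.

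Here are the steps I would carry out, in order. First, I would set up Kummer theory over $K(\mu_p)$: the classes in $K^\times/(K^\times)^p$ that die in $M^\times/(M^\times)^p$ correspond (after base change to $K(\mu_p)$, where Kummer theory is clean) to subextensions of $M/K(\mu_p)$, and the hypothesis $[M:K(\mu_p)] \nmid p$ says this group $G(M/K(\mu_p))$ is not cyclic of order dividing $p$ — in particular, as a subgroup of $\GL_2(\F_p)$ it is ``large'' in the relevant sense, so $M$ contains no nontrivial abelian-of-exponent-$p$ subextension of $K(\mu_p)$ coming from Kummer classes other than the obvious ones (this is where I expect to invoke that the image of $G_K$ in $\Aut(E[p])$ is not contained in a group whose derived structure would produce extra $\mu_p$-Kummer extensions inside $M$). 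Second, given $u \in \O_{K,\Sigma}^\times$ with class in $\A_2$, I would consider the field $F = K(\mu_p)(\sqrt[p]{u})$ and show that $F/K(\mu_p)$ is unramified outside $\Sigma$ and at primes above $\P_0$, hence (by the defining property of $\Sigma$, that primes in $\Sigma$ generate the class group and the $S$-unit map is injective) that $F$ is contained in a controlled ray-class-type extension. Third, I would use Lemma \ref{plem} — which characterizes $\P_0,\P_1,\P_2$ by the order of $\Frob_\q$ in $G(M/K)$ — together with Chebotarev to detect, via Frobenius elements at primes in $\P_0$, that any such $F$ must already sit inside $M$; the condition ``unramified at all $\q \in \P_0$'' combined with ``$[M:K(\mu_p)] \nmid p$'' leaves no room for $F$ to be a genuinely new degree-$p$ extension, forcing $F \subset M$ and hence the class of $u$ into $\A_1$.

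The main obstacle, I expect, is the third step: translating the numerical hypothesis $[M:K(\mu_p)] \nmid p$ into the statement that there is ``no room'' for an extra $\mu_p$-Kummer extension unramified at $\P_0$. Concretely one must rule out the possibility that $u$ generates a $\Z/p$-extension of $K(\mu_p)$ that is unramified at $\P_0$ but not contained in $M$; the point is that if such an extension existed, compositing with $M$ would produce an abelian-by-$G(M/K(\mu_p))$ extension whose ramification behavior at the Chebotarev-dense set $\P_0$ is incompatible with $G(M/K(\mu_p))$ being non-cyclic (or of order not dividing $p$). I would make this precise by examining the action of $G(M/K(\mu_p))$ on the relevant Kummer module and checking that non-divisibility of $[M:K(\mu_p)]$ by $p$ forces the $u$-Kummer class to be $G(M/K)$-invariant in the appropriate sense, which pins it down to the $\Delta$-line. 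Once that incompatibility is established, the containment $\A_2 \subset \A_1$ follows formally.
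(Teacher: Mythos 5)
Your overall strategy---argue that if the class of $u$ does not die in $M^\times/(M^\times)^p$ then primes of $\P_0$ detect this, which is indeed the direction of the paper's proof---is the right one, but two essential points are wrong or missing, so the proposal does not go through as written. The central error is your translation of the condition $u\in\A_2$: you treat it as saying that $K(\sqrt[p]{u})$ is \emph{unramified} at every $\q\in\P_0$, and your steps 2--3 (ray-class-type bounds, ``incompatible ramification behavior at the Chebotarev-dense set $\P_0$'') are built on that reading. But for $\q\in\P_0$ we have $\q\notin\Sigma$ and $u\in\O_{K,\Sigma}^\times$, so $\q\nmid pu$ and the extension $K(\mu_p,\sqrt[p]{u})/K(\mu_p)$ is \emph{automatically} unramified at primes above $\q$; a ramification constraint at $\P_0$ carries no information and cannot yield any contradiction. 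The actual content of $\A_2$ is a splitting condition: $u$ is a $p$-th power in $\O_\q^\times$, equivalently any Frobenius at $\q$ fixes $\sqrt[p]{u}$.

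The second gap is that the hypothesis $[M:K(\mu_p)]\nmid p$ is never used in a precise way; ``large in the relevant sense,'' ``no room,'' ``derived structure,'' and ``$G(M/K)$-invariant in the appropriate sense'' stand in for the one concrete fact needed. The paper's argument is: if $u\notin\A_1$ then $\sqrt[p]{u}\notin M$, so (since $p$ is prime) $M$ and $K(\mu_p,\sqrt[p]{u})$ are linearly disjoint over $K(\mu_p)$---no hypothesis needed here; then, because a subgroup of $\GL_2(\F_p)$ all of whose elements have order dividing $p$ has order dividing $p$ (Cauchy plus the fact that a $p$-Sylow subgroup of $\GL_2(\F_p)$ has order $p$), the assumption $[M:K(\mu_p)]\nmid p$ produces $\tau\in G(M/K(\mu_p))$ with $\tau^p\ne 1$; lifting $\tau$ through $G(M(\sqrt[p]{u})/K(\mu_p))\cong G(M/K(\mu_p))\times\Z/p$ with nontrivial second component and applying Chebotarev gives a prime $\q$ with $\Frob_\q^p\ne 1$ on $M$, hence $\q\in\P_0$ by Lemma \ref{plem}, while $\Frob_\q(\sqrt[p]{u})=\zeta\sqrt[p]{u}\ne\sqrt[p]{u}$; this contradicts $u\in\A_2$, which forces $\sqrt[p]{u}\in K_\q$. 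Note also that your closing sentence reads the hypothesis as $p\nmid[M:K(\mu_p)]$, which is not what is assumed (it is $[M:K(\mu_p)]$ that must not divide $p$), that non-cyclicity of $G(M/K(\mu_p))$ is neither assumed nor needed, and that the reduction to the one-dimensional $\Delta$-line (and the unsupported claim that $M$ contains no extra exponent-$p$ Kummer subextensions of $K(\mu_p)$) plays no role: the statement $\A_2\subset\A_1$ is exactly ``every $u\in\A_2$ becomes a $p$-th power in $M$,'' and the Frobenius argument above proves it directly.
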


\begin{proof}
    We claim that $\A_2 - \A_1$ is an empty set. Let $x\in\A_2 - \A_1$. Then $\sqrt[p]{x}\notin M^\times$. Since $M$ and $K(\mu_p,\sqrt[p]{x})$ are linearly disjoint over $K(\mu_p)$, by Lemma \ref{plem}, there exists a prime $\q$ of $K$ so that $\q\in\P_0(E)$ and $\Frob_\q(\sqrt[p]{x})=\zeta\sqrt[p]{x}$. However $x\in\A_2$ implies that $\sqrt[p]{x}\in\O_\q^\times$. This is a contradiction.
\end{proof}

\begin{thm} \label{2ndthm}
   Assume that $[M:K(\mu_p)]\nmid p$. Then, for every positive integer $n$, there exist infinitely many $\chi\in\cC(K)$ satisfying $r_p(E,\chi)=r_p(E)+2n.$
\end{thm}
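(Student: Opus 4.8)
The plan is to mirror the structure of the proof of Theorem~\ref{firstthm}: reduce to the claim that for every $\chi'\in\cC(K)$ there exist infinitely many $\chi''\in\cC(K)$ with $r_p(E,\chi'')=r_p(E,\chi')+2$, and then induct on $n$. Fix $\chi'$ and enlarge $\Sigma$ to $\Sigma(\chi')=\Sigma\cup\{\p\mid\cond(\chi')\}$. As before, form the field $N_E'$ (the Galois closure over $K$ of $L_E'K(\sqrt[p]{\O_{K,\Sigma(\chi')}^\times})$, where $L_E'$ trivializes all classes in $\Sel_p(E/K,\chi')$ restricted to $G_M$) and, using Chebotarev, pick a prime $\q\notin\Sigma(\chi')$ with $\Frob_\q|_{N_E'}=1$, so $\q\in\P_2(E)$ and $\loc_\q(\Sel_p(E/K,\chi'))=0$. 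The aim is to produce a global $\chi\in\cC(K)$ which is unramified outside $\q$, trivial on $\Sigma(\chi')$, and ramified at $\q$; then exactly as in Theorem~\ref{firstthm} one gets $\Sel_p(E/K,\chi')=\Sel_p(E/K,\chi')_\q\subset\Sel_p(E/K,\chi'\chi)\subset\Sel_p(E/K,\chi')^\q$, the parity computation via Theorem~\ref{mod2} (using that $\chi'_\q$ is unramified and Proposition~\ref{localcond}) forces $r_p(E,\chi'\chi)-r_p(E,\chi')\in\{0,2\}$, and one must rule out $0$.

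The new ingredient, replacing the use of a rational $p$-torsion point, is the hypothesis $[M:K(\mu_p)]\nmid p$, which through Lemma~\ref{Alem1} gives $\A_2\subset\A_1$. The role of this is to guarantee that the \emph{local} character $\psi$ we want at $\q$ actually extends to a global character: by Lemma~\ref{cftlemma} the obstruction is that $\psi_\q$ must kill $\O_{K,\Sigma(\chi')}^\times$, and more delicately, to force the Selmer rank to jump rather than stay put, we will want to choose $\psi_\q$ on the \emph{full} local field (not just on units) so that $\gamma_\q(\chi_\q)=\Hom(G(F/K_\q),E[p])$ by Lemma~\ref{4.6} is a well-chosen line/plane; the freedom to do this while still extending globally is exactly what $\A_2\subset\A_1$ buys. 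Concretely I would apply Lemma~\ref{elem} with $G=\prod_{v\in\Sigma(\chi')}K_v^\times\times\prod_{\q'\in\P_0}\O_{\q'}^\times$ and $H$ the local unit (or local) group at $\q$, the subgroup $J$ being the image of $\O_{K,\Sigma(\chi')}^\times$; the condition $\A_2\subset\A_1$ ensures that the relevant projection $J/J^p\to G/G^p$ behaves well enough that $\Hom((G\times H)/J,\mu_p)\to\Hom(H,\mu_p)$ surjects onto a large enough subspace to contain the $\psi_\q$ we need.

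Once the global $\chi$ exists with the prescribed local behavior, the argument that $\Sel_p(E/K,\chi')\subsetneq\Sel_p(E/K,\chi'\chi)$ proceeds as in Theorem~\ref{firstthm} but now using the structural facts of Remark~\ref{canlem}: since $\q\in\P_2(E)$ we have $E^{\chi''}(K_\q)[p^\infty]=E^{\chi''}[\p]$ and $\dim_p\gamma_\q(\chi''_\q)=2$, and one exhibits a class in $\Sel_p(E/K,\chi'\chi)$ whose localization at $\q$ is nonzero. The point is that $\loc_\q$ on $\Sel_p(E/K,\chi')^\q$ has image the orthogonal complement of $\loc_\q(\Sel_p(E/K,\chi')_\q)=0$, hence is all of $H^1(K_\q,E[p])/\gamma_\q(\chi'_\q)$; choosing $\psi_\q$ appropriately makes $\gamma_\q(\chi'_\q\chi_\q)$ meet this image nontrivially while remaining compatible with $\Sel_p(E/K,\chi')$ at all other places, producing the strict inclusion and hence the jump by exactly $2$. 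Finally, Chebotarev gives infinitely many choices of $\q$, hence infinitely many $\chi''$, and induction on $n$ finishes.

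I expect the main obstacle to be the precise bookkeeping in the extension step: verifying that the hypothesis $[M:K(\mu_p)]\nmid p$ (via $\A_2\subset\A_1$) is exactly strong enough to let Lemma~\ref{elem}(ii) — or at least part (i) with a controlled $\pi_H(J_0)$ — apply to the correct pair of groups so that a \emph{ramified} $\psi_\q$ realizing the desired line in $\gamma_\q$ lifts to a global character trivial on $\Sigma(\chi')$ and unramified elsewhere. In particular one must check that $\Delta$ (the generator of $\A_1$) does not obstruct the choice of $\psi_\q$ at $\q$, which is where $\Frob_\q|_{N_E'}=1$ (so $\psi_\q$ automatically kills $\sqrt[p]{\O_{K,\Sigma(\chi')}^\times}$) and the inclusion $\A_2\subset\A_1$ combine; ruling out the rank-staying-put case reduces, as above, to a nonvanishing of a localization that follows from Remark~\ref{canlem}.
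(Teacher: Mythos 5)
Your overall architecture (Chebotarev choice of $\q\in\P_2$ with $\loc_\q(\Sel_p(E/K,\chi'))=0$, sandwich between strict and relaxed Selmer groups, parity forcing a jump of $0$ or $2$, globalization via Lemmas \ref{elem}--\ref{cftlemma} with the hypothesis entering through Lemma \ref{Alem1}) matches the paper, but there is a genuine gap at the decisive step: ruling out the ``no jump'' case. In Theorem \ref{firstthm} that step is carried out by localizing the class coming from a rational point $P\in E^{\chi''}(K)[\p]$, which is exactly what is unavailable in Case 2, and neither ``as in Theorem \ref{firstthm}'' nor Remark \ref{canlem} produces a Selmer class with nonzero localization at $\q$. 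What is actually needed is a \emph{ramified} $\psi_\q$ whose local condition $\gamma_\q(\chi'_\q\psi_\q)$ equals the $2$-dimensional image $W=\loc_\q(\Sel_p(E/K,\chi')^\q)$ (by parity, meeting $W$ nontrivially already forces equality). This is not automatic: by Lemma \ref{4.6} the subspaces $\gamma_\q(\psi)$ for ramified $\psi$ form only $p$ specific maximal isotropic complements of $\Hu(K_\q,E[p])$, and one must prove that $W$ is one of them. The paper imports precisely this fact from \cite[Proposition~7.2]{kmr2} (existence of $(p-1)$ local characters $\psi'_{\q}$ with $r_p(E,\psi'_{\q})=r_p(E)+2$, and its twisted analogue in the induction step); your proposal asserts the existence of an ``appropriate'' $\psi_\q$ without any argument, so the rank-increase itself is unproved.

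The globalization bookkeeping is also off in a way that matters. Because the jump requires a \emph{specific} local character at $\q$ (one of the $p-1$ cutting out a single ramified field), you must prescribe $\psi_\q$ on all of $K_\q^\times$; then the obstruction in Lemma \ref{cftlemma} involves $\O_{K,\Sigma(\chi')\cup\{\q\}}^\times$, which contains a generator of a power of $\q$, and $\psi_\q$ of that element cannot be controlled --- the trick ``$\Frob_\q$ trivial on $K(\sqrt[p]{\O_{K,\Sigma(\chi')}^\times})$'' only kills units, which is why $\A_2\subset\A_1$, not unit-killing, is the real point. The paper's resolution is to surrender control exactly at $\P_0$ (harmless by Remark \ref{zerorem}) and nowhere else: in Lemma \ref{elem} one takes $G=\prod_{\p\in\P_0}\O_\p^\times$ and $H=\prod_{\p\in Q}\O_\p^\times\times\prod_{v\in\Sigma(\chi')\cup\{\q\}}K_v^\times$, so the only obstruction is $\A_2$, which is killed using Lemma \ref{Alem1} together with $\psi_\q(\Delta)=1$ (from $\Frob_\q|_{M}=1$). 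Your grouping instead places $\Sigma(\chi')$ inside $G$ and omits the places of $Q$ entirely; with that choice you get no guarantee that $\chi_v=1$ at places of bad reduction, places above $p$, and places dividing $\cond(\chi')$, and no unramifiedness at $Q$, so you can no longer assert $\gamma_v(\chi'_v\chi_v)=\gamma_v(\chi'_v)$ away from $\q$, and the inclusion $\Sel_p(E/K,\chi')_\q\subset\Sel_p(E/K,\chi'\chi)\subset\Sel_p(E/K,\chi')^\q$ on which the whole count rests breaks down. Both defects are repairable, but as written the two key new ingredients of this case --- the local input from \cite{kmr2} and the $\P_0$-relaxed extension of a fully prescribed $\psi_\q$ --- are missing or misconfigured.
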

\begin{proof}
    Let $M_E$ denote the Galois closure of $L_E$ over $K$. Choose a prime $\q_1$ of $K$ so that $\Frob_{\q_1}|_{M_E}=1$. Thus, $\q_1\in\P_2(E)$ by Lemma \ref{plem}.
    By \cite[Proposition 7.2]{kmr2}, there exist $(p-1)$-characters $\psi'_{\q_1}\in\cC(K_{\q_1})$ satisfying $r_p(E,\psi'_{\q_1})=r_p(E)+2$.
    Let $\Sigma(\q_1):=\Sigma\cup\{\q_1\}$.
Define \begin{itemize}
    \item $Q:=(\Sigma(\q_1)\cup\P_0)^c,$
    \item $J:=\O_{K,\Sigma(\q_1)}^\times,$
    \item $G:=\prod_{\p\in\P_0}\O_\p^\times,$
    \item $ H:=\prod_{\p\in Q} \O_\p^\times \times \prod_{v\in\Sigma(\q_1)}K_v^\times.$
\end{itemize} By Lemma \ref{elem}, the image of map $$\cC(K) \too \prod_{\p\in Q}\Hom(\O_\p^\times, \mu_p) \times \prod_{v\in \Sigma(\q_1)} \Hom(K_v^\times, \mu_p)$$ is equal to $$\{f\in\prod_{\p \in Q} \Hom(\O_\p^\times, \mu_p) \times \prod_{v \in \Sigma(\q_1)} \Hom(K_v^\times, \mu_p) : f(\A_2)=1 \}.$$ 
Let $$\psi=(\psi_v) \in \prod_{\p \in Q} \Hom(\O_\p^\times, \mu_p) \times \prod_{v \in \Sigma(\q_1)} \Hom(K_v^\times, \mu_p)$$ be such that
    \begin{itemize}
    \item $\psi_\p$ is trivial for $\p\in Q$,
        \item $\psi_v=1_v$ for $v\in\Sigma$,
        \item $\psi_{\q_1}=\psi'_{\q_1}$.
    \end{itemize}
     Observe that $\psi_{\q_1}(\Delta)=1$ by the construction of $\q_1$.
     Then $\psi(\Delta)=1$. By Lemma \ref{Alem1}, $\psi(\Delta)=\psi(\A_1)=1=\psi(\A_2).$ Hence, there exists ${\chi'} \in \cC(K)$ such that
    \begin{itemize}
    \item ${\chi'}_\p$ is unramified for $\p\in Q$,
        \item ${\chi'}_v=1_v$ for $v\in\Sigma$,
        \item ${\chi'}_{\q_1}=\psi'_{\q_1}$.
       \end{itemize} 
       Then $\Sel_p(E,{\chi'})=\Sel_p(E,\psi'_{\q_1})$. Thus, $r_p(E,{\chi'})=r_p(E)+2.$

Now, we will use induction. Let $\chi'$ be the global character(as in the above proof) such that $r_p(E,\chi')=r_p(E)+2n.$ Let $L$ be the fixed field of $\cap_{s\in\Sel_p(E,\chi')}\ker(\bar{s})$, where $\bar{s}$ is the image of $s$ in the restriction map $$\Sel_{p}(E/K,\chi) \hookrightarrow H^1(K,E[p]) \too \Hom(G_{M},E[p]).$$ Let $N$ be the Galois closure of $L$ over $K$. Choose a prime $\q_2\notin\Sigma$ so that $\q_2\nmid\cond(\chi')$ and that $\Frob_{\q_2}|_{N}=1$. Then $\q_2\in\P_2(E)$ and $\loc_{\q_2}(\Sel_p(E/K,\chi'))=0$. Then, there exist $(p-1)$-characters $\psi_{\q_2}\in\cC(K_{\q_2})$ such that $r_p(E,\chi',\psi_{\q_2})=r_p(E,\chi')+2$, where $\Sel_p(E,\chi',\psi_{\q_2})$ is defined by the following exact sequence: $$0 \rightarrow \Sel_p(E,\chi,\psi_{\q_2}) \rightarrow H^1(K,E[p]) \rightarrow {\frac{H^1(K_{\q_2},E[p])} {\gamma_{\q_2}(\chi'_{\q_2}\psi_{\q_2})}}\dirsum{v\ne\q_2}{\displaystyle\frac{H^1(K_v,E[p])} {\gamma_v(\chi'_v)}}.$$
Let $\Sigma(\chi'):=\Sigma\cup\{\p\notin\P_0 : \p\mid\cond(\chi')\}$, and let 
$\Sigma(\chi',\q_2):=\Sigma(\chi')\cup\{\q_2\}$.
Define \begin{itemize}
    \item $Q':=(\Sigma(\chi',\q_2)\cup\P_0)^c,$
    \item $J':=\O_{K,\Sigma(\chi',\q_2)}^\times,$
    \item $G':=\prod_{\p\in\P_0}\O_\p^\times,$
    \item $ H':=\prod_{\p\in Q'} \O_\p^\times \times \prod_{v\in\Sigma(\chi',\q_2)}K_v^\times.$
\end{itemize} By the above proof, there exists $\chi\in\cC(K)$ such that \begin{itemize}
    \item $\chi_\p$ is unramified for $\p\in Q'$,
        \item $\chi_v=1_v$ for $v\in\Sigma(\chi')$,
        \item $\chi_{\q_2}=\psi_{\q_2}$. 
       \end{itemize} 
Then $\Sel_p(E,\chi\chi')=\Sel_p(E,\chi',\psi_{\q_2})$ and hence $r_p(E,\chi\chi')=r_p(E,\chi')+2=r_p(E)+2n+2$.
\end{proof}

\subsection{Case 3. $K=K(\mu_p)$}

\begin{rem} \label{caserem}
If \([M:K(\mu_p)] \mid p\) and \( K \subsetneq K(\mu_p) \), there also exists a local character $\psi$ such that $\dim_p(\Sel_p(E/K,\psi))=\dim_p(\Sel_p(E/K))+2$ as in the proof of Theorem \ref{2ndthm}.
However, we encounter difficulties in extending the local character to a global character.
For this reason, we assume \( K = K(\mu_p) \). Then we have the follwoing corollary.
\end{rem}

\begin{cor}
    Suppose that $K=K(\mu_p)$. Then, for every positive integer $n$, there exist infinitely many $\chi\in\cC(K)$ satisfying $r_p(E,\chi)=r_p(E)+2n.$
\end{cor}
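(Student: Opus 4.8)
The plan is to show that when $K=K(\mu_p)$, the elliptic curve $E/K$ automatically satisfies either hypothesis \eqref{A1} or hypothesis \eqref{A2}, so that the corollary follows immediately from Theorem \ref{firstthm} or Theorem \ref{2ndthm}. Concretely, I would argue by contradiction: suppose $E/K$ satisfies neither \eqref{A1} nor \eqref{A2}. Then $E(K)[p]=0$ and $[M:K(\mu_p)]\mid p$, where $M=K(E[p])$. Since $K=K(\mu_p)$, the second condition reads $[M:K]\mid p$, so $G(M/K)$ is either trivial or cyclic of order $p$.

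First I would handle the case $[M:K]=1$, i.e. $M=K$. Then $E[p]\subset E(K)$, so in particular $E(K)[p]=E[p]\ne 0$, contradicting $E(K)[p]=0$. Next I would handle the case $[M:K]=p$. Here $G:=G(M/K)$ is cyclic of order $p$, acting $\F_p$-linearly on the two-dimensional space $E[p]$. Because $K\supseteq K(\mu_p)$, the Galois action preserves the Weil pairing, so $G\hookrightarrow \SL_2(\F_p)$; more precisely the image lies in $\SL(E[p])$. An element of order $p$ in $\SL_2(\F_p)$ is conjugate to a single unipotent Jordan block $\left(\begin{smallmatrix}1&1\\0&1\end{smallmatrix}\right)$, which has a nonzero fixed vector. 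Hence $E[p]^{G}=E[p]^{G_K}=E(K)[p]\ne 0$, again contradicting $E(K)[p]=0$. Either way we reach a contradiction, so $E/K$ must satisfy \eqref{A1} or \eqref{A2}, and the conclusion of Theorem \ref{firstthm} or Theorem \ref{2ndthm} applies verbatim to give, for every positive integer $n$, infinitely many $\chi\in\cC(K)$ with $r_p(E,\chi)=r_p(E)+2n$.

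I expect the only genuinely content-bearing step to be the linear-algebra observation that a nontrivial $p$-element of $\SL_2(\F_p)$ always has a nonzero fixed vector, together with the remark that $K=K(\mu_p)$ forces the mod-$p$ representation to land in $\SL_2$ rather than $\GL_2$ (this is exactly where the hypothesis $K=K(\mu_p)$ is used). Both facts are standard, so the proof is short; the main subtlety is simply being careful that "$[M:K(\mu_p)]\mid p$" together with $K=K(\mu_p)$ really does give $|G(M/K)|\in\{1,p\}$ rather than allowing, say, order dividing $p$ in some larger sense — but since $p$ is prime this is automatic. I would close by noting that this is precisely the reductive step promised in the introduction, where it is stated that if \eqref{A3} holds then $E/K$ satisfies either \eqref{A1} or \eqref{A2}.

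\begin{proof}
Since $K=K(\mu_p)$, it suffices to show that $E/K$ satisfies \eqref{A1} or \eqref{A2}; the conclusion then follows from Theorem \ref{firstthm} or Theorem \ref{2ndthm}. Suppose not. Then $E(K)[p]=0$ and $[M:K(\mu_p)]=[M:K]$ divides $p$, so $G:=G(M/K)$ has order $1$ or $p$. If $|G|=1$ then $E[p]\subset E(K)$, so $E(K)[p]=E[p]\ne 0$, a contradiction. If $|G|=p$, then since $K\supseteq K(\mu_p)$ the mod-$p$ Galois representation preserves the Weil pairing, so the image of $G_K$ in $\Aut(E[p])=\GL_2(\F_p)$ lies in $\SL_2(\F_p)$; in particular a generator $\sigma$ of $G$ maps to an element of order $p$ in $\SL_2(\F_p)$. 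Every such element is conjugate to $\left(\begin{smallmatrix}1&1\\0&1\end{smallmatrix}\right)$ and hence has a nonzero fixed vector in $E[p]$. Therefore $E(K)[p]=E[p]^{G}\ne 0$, again a contradiction. Thus \eqref{A1} or \eqref{A2} holds, and the result follows.
\end{proof}
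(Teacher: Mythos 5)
Your proof is correct and follows essentially the same route as the paper: split on whether $[M:K(\mu_p)]$ divides $p$ and invoke Theorem \ref{firstthm} or Theorem \ref{2ndthm} accordingly. The only difference is that you spell out the step the paper asserts without comment (that $[M:K]\mid p$ forces $E(K)[p]\ne 0$, via the fixed vector of a unipotent element — in fact any order-$p$ element of $\GL_2(\F_p)$ is already unipotent, so the Weil-pairing/$\SL_2$ reduction is not even needed), which is a welcome clarification rather than a departure.
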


\begin{proof}
   If $[M:K(\mu_p)]\mid p$, then $E(K)[p]\ne 0$. Then the conclusion follows from Theorem \ref{firstthm}. If $[M:K(\mu_p)] \nmid p$, then the conclusion follows from Theorem \ref{2ndthm}.
\end{proof}

\begin{rem}
    Since $\p$ is the unique prime ideal of the ring of integers $\O$ of cyclotomic field of $p$-th roots of unity, $\p^{p-1}=(p)$. Consider the exact sequence : $$0 \rightarrow E^\chi[\p] \rightarrow E^\chi[p] \rightarrow E^\chi[\p^{p-2}] \rightarrow 0.$$ Taking the Galois cohomology yields the following exact sequence: $$0 \rightarrow \frac{E^\chi(K)[\p^{p-2}]}{\p E^\chi(K)[p]}\rightarrow H^1(K,E^\chi[\p]) \rightarrow H^1(K,E^\chi[p])[\p] \rightarrow 0.$$ It induces an exact sequence $$0 \rightarrow H \rightarrow \Sel_\p(E^\chi/K) \rightarrow \Sel_p(E^\chi/K)[\p] \rightarrow 0,$$ where $H$ is a subgroup of $\frac{E^\chi(K)[\p^{p-2}]}{\p E^\chi(K)[p]}$.
    By \cite[Proposition 5.9]{kmr1}, $\Sel_\p(E^\chi/K)=\Sel_p(E/K,\chi)$.
    Since $H$ is a subgroup of $\frac{E^\chi(K)[\p^{p-2}]}{\p E^\chi(K)[p]}$, $$\dim_p(H) \le \dim_p(E^\chi[\p^{p-2}]) \le 
    \dim_p(E^\chi[p]) \le 2(p-1)$$ and hence we have the following.
    \end{rem}

\begin{cor} \label{corollary}
    Let $E/K$ be an elliptic curve over a number field $K$. Suppose that one of the following conditions holds:  
\begin{enumerate}
    \item \( E(K)[p] \neq 0 \), or
    \item \( [K(E[p]) : K(\mu_p)] \nmid p \),
    \item \( K = K(\mu_p) \).
\end{enumerate} For any positive integer $n$, there exist infinitely many $\chi\in\cC(K)$ satisfying $\dim_p(\Sel_p(E^\chi/K)) \ge n.$ 
\end{cor}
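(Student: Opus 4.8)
The plan is to derive Corollary \ref{corollary} directly from Theorem \ref{thm1.1} together with the exact sequence recalled in the preceding remark. First I would fix a positive integer $n$ and apply Theorem \ref{thm1.1} with the parameter chosen so that the guaranteed rank increase is at least $2n + 2(p-1)$; that is, I invoke the theorem with its $n$ replaced by some $n'$ with $2n' \ge 2n + 2(p-1)$ (concretely $n' = n + p - 1$ works). This produces infinitely many characters $\chi \in \cC(K)$ with
\[
r_p(E,\chi) = \dim_p \Sel_p(E/K,\chi) \ge 2(p-1) + 2n \ge 2(p-1) + n.
\]

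Next I would use the identification $\Sel_p(E/K,\chi) = \Sel_\p(E^\chi/K)$ from \cite[Proposition 5.9]{kmr1}, together with the short exact sequence established in the remark,
\[
0 \too H \too \Sel_\p(E^\chi/K) \too \Sel_p(E^\chi/K)[\p] \too 0,
\]
where $H$ is a subgroup of $E^\chi(K)[\p^{p-2}]/\p E^\chi(K)[p]$. Since $\dim_p(H) \le \dim_p(E^\chi[p]) \le 2(p-1)$, additivity of dimension in the exact sequence gives
\[
\dim_p\bigl(\Sel_p(E^\chi/K)[\p]\bigr) = \dim_p\bigl(\Sel_\p(E^\chi/K)\bigr) - \dim_p(H) \ge \bigl(2(p-1)+n\bigr) - 2(p-1) = n.
\]
Finally, since $\Sel_p(E^\chi/K)[\p]$ is a subgroup of $\Sel_p(E^\chi/K)$, we get $\dim_p(\Sel_p(E^\chi/K)) \ge \dim_p(\Sel_p(E^\chi/K)[\p]) \ge n$, and because Theorem \ref{thm1.1} supplies infinitely many such $\chi$, the same infinitude carries over to the conclusion.

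There is essentially no obstacle here beyond bookkeeping: every ingredient is already in place. The one point that requires a moment of care is that the bound $\dim_p(H) \le 2(p-1)$ is genuinely uniform in $\chi$ — it does not depend on the particular twist — so a single choice of inflated parameter $n' = n + p - 1$ in Theorem \ref{thm1.1} suffices for all the characters simultaneously. I would also remark that the three hypotheses in Corollary \ref{corollary} are exactly the hypotheses of Theorem \ref{thm1.1}, so no separate case analysis is needed; the corollary is a formal consequence. (One could alternatively sharpen the argument by noting $\dim_p(E^\chi(K)[\p^{p-2}]/\p E^\chi(K)[p]) \le \dim_p(E^\chi[\p^{p-2}])$ and bounding that by $2(p-2)$ when one keeps track of the $\p$-adic filtration, but the crude bound $2(p-1)$ is all that is needed.)
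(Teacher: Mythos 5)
Your proposal is correct and follows essentially the same route as the paper: the paper's own justification is the remark preceding the corollary, which supplies exactly the exact sequence $0 \to H \to \Sel_\p(E^\chi/K) \to \Sel_p(E^\chi/K)[\p] \to 0$, the identification $\Sel_\p(E^\chi/K)=\Sel_p(E/K,\chi)$, and the uniform bound $\dim_p(H)\le 2(p-1)$, after which one applies Theorem \ref{thm1.1} with a sufficiently large parameter just as you do.
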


\section*{Acknowledgments}
The author is grateful to his PhD advisor, Myungjun Yu, for the guidance and assistance provided during the course of this research. The author was supported by the National Research Foundation of Korea (NRF) grant funded by the Korea government (MSIT) (No. 2020R1C1C1A01007604).
He would also like to thank the referee for many helpful comments that greatly improved the readability of the paper.

\end{document}